\NeedsTeXFormat{LaTeX2e}
\documentclass[12pt]{amsart}
\usepackage{graphicx}
\usepackage{a4wide}
\usepackage{amssymb}
\usepackage{amsthm}
\usepackage{amsmath}
\usepackage{amscd} \usepackage{verbatim}
\usepackage[all]{xy}
\textheight8.5in \textwidth6.7in \numberwithin{equation}{section}
\theoremstyle{plain}
\newtheorem{theorem}{Theorem}[section]

\newtheorem{lemma}[theorem]{Lemma}

\theoremstyle{definition}
\newtheorem{definition}[theorem]{Definition}

\theoremstyle{remark}
\newtheorem*{remark}{Remark}

\newcommand{\maxp}{\text {\rm maxp}}

\newcommand{\N}{\mathbb{N}}

\newcommand{\D}{\mathbf{D}}

\newcommand{\ga}{\gamma}

\begin{document}
\title[Multiplicative properties of $k$-regular partitions]
{Multiplicative properties \\ of the number of $k$-regular partitions}
\author{Olivia Beckwith and Christine Bessenrodt}
\address{Department of Mathematics and Computer Science,
Emory University, Atlanta, Georgia 30322}
\email{obeckwith@gmail.com}
\address{Faculty of Mathematics and Physics,
Leibniz University Hannover,
Welfengarten 1,
D-30167 Hannover,
Germany}
\email{bessen@math.uni-hannover.de}
\thanks{}
\begin{abstract}
In a previous paper of the second author with K.\ Ono,
surprising multiplicative properties of the partition
function were presented.
Here, we deal with $k$-regular partitions.
Extending the generating function for $k$-regular partitions
multiplicatively to a function on $k$-regular partitions,
we show that it takes its maximum
at an explicitly described small set of partitions,
and can thus easily be computed.
The basis for this is an extension of a classical
result of Lehmer, from which an inequality for the generating
function for $k$-regular partitions is deduced
which seems not to have been noticed before.
\end{abstract}
\date{September 9, 2014}
\maketitle

\section{Introduction and statement of results}\label{sec:intro}

A partition of a natural number $n$
is a finite weakly decreasing sequence of positive integers
that sums to~$n$.
For $k\in \N$, $k>1$,
we consider the generating function $p_k(n)$
that enumerates $k$-regular partitions of $n$,
i.e., it counts partitions of $n$
where no part
is divisible by~$k$.
These generating functions arise
in many different contexts, in particular in connection
with the representation theory of the symmetric groups, Hecke algebras,
and related groups and algebras;
for a long time, this has been studied
both in combinatorics and number theory.

For the classical (unrestricted) partition function $p(n)$,
explicit formulae are known due to
the work of Hardy, Ramanujan and Rademacher, and more recent work
of Bruinier and Ono~\cite{BruinierOno}.
Based on a result due to Lehmer, the following inequality was
shown in a recent article by the second author and Ono~\cite{BesOno}:
\medskip

\centerline{
For any integers $a,b$ such that  $a,b >1$ and $a+b>9$, we have
$p(a)p(b)>p(a+b)$.}
\medskip

Also the cases of equality were determined
in \cite{BesOno}. The inequality above
was then used to study an ``extended partition function",
given by defining for a partition $\mu=(\mu_1,\mu_2,\ldots)$:
$$
p(\mu)=\prod_{j\ge 1} p(\mu_j).
$$
With $P(n)$ denoting the set of all partitions of~$n$,
the maximum
$$\maxp(n)=\max(p(\mu) \mid \mu \in P(n))$$
was determined explicitly in \cite{BesOno};
we recall this below in Theorem~\ref{thm:BO-max}.

Our aim is to prove a corresponding result for an
extension of the generating function $p_k(n)$ to
a function on the set $P_k(n)$ of
all $k$-regular partitions of $n$,
defined for $\mu=(\mu_1,\mu_2,\ldots)\in P_k(n)$ by:
$$
p_k(\mu)=\prod_{j\ge 1} p_k(\mu_j).
$$
We then determine on which partitions the maximum
$$\maxp_k(n)=\max(p_k(\mu) \mid \mu \in P_k(n))$$
is attained, and we use this to give an explicit formula for the
maximum.

By Theorem~\ref{thm:BO-max},  for $k>6$
nothing new happens, as all the partitions providing the maximal values
$\maxp(n)$ are already $k$-regular; hence we may restrict our considerations to
the cases where $2\le k\le 6$.
For this case,
we first show  in Theorem~\ref{thm:reg-inequality}
that $p_k(n)$ satisfies a similar inequality
as the one given for $p(n)$ above,
where again we specify the corresponding bounds explicitly.

For the maximum problem,
we find that the behavior is quite similar to the one
observed in~\cite{BesOno}, though we lose uniqueness for small~$k$;
see Theorem~\ref{thm:max-regular} for the detailed results.

\section{An analytic result on the generating function for $k$-regular partitions}

The main result of this section is
the following analytic inequality for the generating function~$p_k(n)$.
As mentioned above,
Theorem \ref{thm:reg-inequality} is the analogue of a result for the
ordinary partition function $p(n)$ in recent work by the second author and Ono~\cite{BesOno}.

\begin{theorem}\label{thm:reg-inequality}
For $k\in \N$, $2\leq k\leq 6$, we define parameters $n_k,m_k$ by the following table:
\begin{center}

\begin{tabular}{ | c | c|  c | c | c | c |}
\hline
$k$             & 2 & 3 &4  & 5 & 6\\
\hline
$n_k$       	&3 & 2 & 2 & 2  &2 \\

$m_k$           & 22& 17 & 9 & 9 & 9 \\
\hline

\end{tabular}
\end{center}
Then for any $a, b\in \N$ with $a,b\geq n_k$ and $a+b\geq m_k$
we have
   $$
       p_k(a) p_k(b) > p_k(a+b).
   $$
Furthermore, all the pairs $(a,b)$ with $2 \le a \le b$ for which this inequality
fails are given in the table below.
$$
\begin{array}{|l|l|l|}
\hline
k & (a,b) \text{ with } p_k(a) p_k(b) = p_k(a+b) & (a,b) \text{ with } p_k(a) p_k(b) < p_k(a+b)\\
\hline
2&  (3, 3), (3, 5), (3, 6), (3, 7), (3, 8), (4, 15),
& (2, *), (3, 4), (4, 4), (4, 5),(4, 6), (4, 7),
\\
  &   (4, 16), (4, 17),(5, 6), (5, 7), (5, 8)
  &  (4, 8), (4, 9), (4, 10),(4, 11), (4, 12), \\
  &
  & (4, 13), (4, 14),  (5, 5)   \\
\hline
3&  (2, 2), (2, 3), (3, 3), (3, 4), (3, 5), (3, 6),
& (3, 11), (3, 13)\\
  & (3, 7), (3, 8), (3, 9),(3, 10)  &  \\
\hline
4&  (2, 2), (2, 3),  (2, 5), (3, 3)  & (2, 4), (3, 5)  \\
\hline
5&   (2,3), (2,4)  & (2,2), (2,5), (3, 3), (3, 5)\\
\hline
6&   (2, 4), (2, 5), (2, 6) & (2, 2), (2, 3),  (3, 3)\\
\hline
\end{array}
$$
\end{theorem}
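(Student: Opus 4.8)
The plan is to deduce the inequality from sharp, \emph{explicit} two-sided bounds for $p_k(n)$ and then to clear the boundary region by a finite computation that simultaneously produces the exceptional tables.

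First I would record the generating function as an eta quotient. Writing $q=e^{2\pi i\tau}$,
\[
\sum_{n\ge 0}p_k(n)q^n=\prod_{\substack{j\ge 1\\ k\nmid j}}\frac{1}{1-q^j}=\frac{(q^k;q^k)_\infty}{(q;q)_\infty}=q^{(1-k)/24}\,\frac{\eta(k\tau)}{\eta(\tau)},
\]
a weakly holomorphic modular form of weight $0$ on $\Gamma_0(k)$. From the asymptotic $\log\sum_{n}p_k(n)q^n\sim \frac{\pi^2(k-1)}{6k\,(1-q)}$ as $q\to 1^-$ one reads off the growth exponent $\alpha_k=\pi\sqrt{2(k-1)/(3k)}$. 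Applying Rademacher's circle method to this eta quotient and truncating the resulting exact series exactly as Lehmer did for $p(n)$, I would produce explicit constants $A_k,B_k>0$, an exponent $\kappa_k\ge 0$, and a threshold $N_k$ with
\[
A_k\,n^{-\kappa_k}e^{\alpha_k\sqrt n}\le p_k(n)\le B_k\,n^{-\kappa_k}e^{\alpha_k\sqrt n}\qquad(n\ge N_k).
\]
This explicit ``extension of Lehmer'' is the technical heart of the whole argument and, I expect, the \emph{main obstacle}: one must bound the Kloosterman-type sums and the Bessel tails well enough to keep $N_k$ and the constants small, and the estimates are tightest precisely for $k=2$, where $\alpha_2=\pi/\sqrt3$ is smallest and hence the inequality is most delicate — this is mirrored in the larger value $m_2=22$.

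Next I would convert the target into a comparison of these bounds. Assuming $a\le b$ and combining the lower bounds for $p_k(a),p_k(b)$ with the upper bound for $p_k(a+b)$ gives
\[
\frac{p_k(a)\,p_k(b)}{p_k(a+b)}\ \ge\ \frac{A_k^2}{B_k}\,\Big(\tfrac{1}{a}+\tfrac{1}{b}\Big)^{\kappa_k}\exp\!\big(\alpha_k(\sqrt a+\sqrt b-\sqrt{a+b})\big).
\]
The elementary estimate $\sqrt a+\sqrt b-\sqrt{a+b}\ge \tfrac12\sqrt a\ge\tfrac12\sqrt{n_k}$ (valid for $a\le b$) shows the exponential factor is bounded below by a constant exceeding $1$, while for fixed $a$ and $b\to\infty$ the whole right-hand side tends to $\frac{A_k^2}{B_k}a^{-\kappa_k}e^{\alpha_k\sqrt a}>1$. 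A short monotonicity analysis in $b$ for each fixed $a\ge n_k$ then yields an explicit threshold $M_k$ such that the strict inequality holds for all $a,b\ge n_k$ with $a+b\ge M_k$; using that $\sqrt a+\sqrt{s-a}$ is minimized at the endpoints of a fixed sum $s=a+b$, the analytic part reduces essentially to the single boundary value $a=n_k$.

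Finally, it remains to treat the finitely many pairs with $n_k\le a\le b$ and $a+b<M_k$, together with the degenerate range $a<n_k$. Here I would simply tabulate $p_k(n)$ for $n$ up to $M_k$ and check each pair directly, which both lowers the threshold from $M_k$ to the optimal $m_k$ and reads off the complete lists of equalities and reversals. The only genuinely infinite degenerate family, $(2,*)$ for $k=2$, is explained once and for all by $p_2(2)=1$: then $p_2(2)p_2(b)=p_2(b)<p_2(b+2)$ for every $b$, since adjoining parts strictly increases the number of $2$-regular partitions. For $k\ge 3$ one has $p_k(2)=2$, so $n_k=2$ is admissible and no such family occurs.
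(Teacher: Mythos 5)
Your proposal follows essentially the same route as the paper: an explicit Lehmer-type truncation of the Rademacher/Hagis exact formula for this eta quotient (with the error term controlled via Weil's Kloosterman bound and explicit divisor-function estimates), two-sided bounds of the shape you describe, a monotonicity argument in $b$ for each fixed $a$ reducing the claim to a finite range, and direct computation to produce the exceptional tables, including the $(2,*)$ family for $k=2$ via $p_2(2)=1$. The one point you understate is the size of the residual finite verification --- the paper must check pairs with $a$ up to about $5\cdot 10^4$ and, for $k=5$ with $a=2$, the thresholds become so large that a separate combinatorial proof of $p_5(b+2)<2\,p_5(b)$ for $b\ge 75$ is needed --- but you correctly identify the explicit constants in the Kloosterman and Bessel estimates as the crux.
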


\bigskip

The main tool for deriving Theorem~\ref{thm:reg-inequality} is an analogue of a classical result of D.~H.\ Lehmer~\cite{Lehmer}.
To prove Theorem~\ref{thm:reg-inequality} we need precise approximations for $p_k(n)$ which have explicitly bounded error terms.
We will use work of Hagis \cite{Hagis} to obtain sufficient approximations in Theorem \ref{thm:Lehmer}.

\subsection{Preliminaries}
Hagis \cite{Hagis} proved an explicit formula for $p_k(n)$ that is analogous to Rade\-macher's formula for $p(n)$.
Before describing his theorem, we introduce several necessary quantities, most importantly the Kloosterman-type
sums $A(m,t,n,s,D)$ and the expressions $L(m,t,n,s,D)$.

Let $D$ divide $t+1$, let $J=J(t,D) := \frac{(t/D) - D}{24D}$, and let $a=a(t) := \frac{t}{24}$.
Let $I_1$ be the order one modified Bessel function of the first kind, and let $L(m,t,n,s,D)$ be given by
\begin{equation}\label{eq:Lsum}
L(m,t,n,s,D) := D^{\frac{3}{2}} m^{-1} \left(\frac{J-s}{n+a} \right)^{\frac{1}{2}} I_1 \left (4 \pi D m^{-1} \left( \frac{(J-s)(n+a)}{(t+1)} \right)^{\frac{1}{2}} \right).
\end{equation}

Several definitions are needed to define the modified Kloosterman sums $A(t,m,n,s,D)$.
First $g=g(m)$ is defined to be $\gcd(3,m)$ when $m$ is odd, and $8\gcd(3,m)$ when $m$ is even.
We define $M=M(m,D):= \frac{m}{D}$.
Additionally, we define $f=f(m)  := \frac{24}{g}$,
and define $r=r(m)$ to be any integer such that $fr \equiv 1 \pmod{gm}$.
Further, $G$ is defined to be analogous to $g$,
in that $G=G(m,D) := \gcd(3,M)$ when $M$ is odd and $G := 8\gcd(3,M)$ when $M$ is even.
Then we also let $B=B(m,D) := \frac{g}{G}$, and we define $A$ to be any integer such
that $AB \equiv 1 \pmod{GM}$. We also let $T=T (t,D):= \frac{t+1}{D}$, and choose $T'=T'(t,D)$
to satisfy $T T' \equiv 1 \pmod{GM}$. More importantly:
\begin{equation*}
U=U(t,m,D) := 1 - AB(t+1), V=V(t,m,D):=ABT'D - 1.
\end{equation*}

Hagis defines special roots of unity, $w(h,t,m,D)$, which satisfy the following:
\begin{equation*}
w (h,t,m,D) = C(h,t,m,D) \exp (2 \pi i(rUh + rV h')/gm).
\end{equation*}
The $C(h,t,m,D)$ satisfy $|C(h,t,m,D)| = 1$, and are independent of $h$ if $m$ is odd, or if $m$ is even and we restrict to $h \equiv d \pmod{8}$ for some odd $d$. In what follows we will not explicitly use the definitions of $C(h,t,m,D)$.

Then we define $A(m,t,n,s,D)$ to be the Kloosterman sum with multiplier system given by
\begin{equation}
A(m,t,n,s,D) = \sum_{\substack{h \pmod{m}, \\ \gcd(h,m)=1} } w(h,t,m,D) \exp(- 2 \pi i( nh  - DT' sh')/ m),
\end{equation}
where $h h' \equiv 1 \pmod{gm}$.

Let $p'(s)$ be the number of partitions of $s$ into an even number of distinct parts
minus the number of partitions of $s$ into an odd number of distinct parts;
by Euler's pentagonal number theorem, $p'(s)$ is $\pm 1$ if $s$ is a pentagonal number, and 0 otherwise.
Recall Glaisher's partition identity saying that
the number $p_k(n)$ of $k$-regular partitions of $n$
is equal to the number of partitions of $n$ where no part
has a multiplicity $\geq k$.
Using the previous notation, Hagis proved the following for the numbers $p_k(n)$
in \cite[Theorem 3]{Hagis}.
\begin{theorem}\label{thm:Hagis}
For all $k \ge 2$, the number of $k$-regular partitions of $n\in \N$ is given by
\begin{equation}
p_k(n) = \frac{2 \pi}{k} \sum_{\substack{D | k \\  D < k^{\frac{1}{2}}}}
\; \sum_{\substack{m\\ \gcd(k,m) = D}}^{\infty} \mbox{ }\sum_{s < J(k,D)} p'(s) A(m, k-1,n,s,D) L(m,k-1,n,s,D).
\end{equation}
\end{theorem}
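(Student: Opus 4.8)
The plan is to establish the formula by the Hardy--Ramanujan--Rademacher circle method, following Rademacher's treatment of $p(n)$ but carrying along the extra structure introduced by the factor $\eta(k\tau)$. The starting point is Glaisher's identity together with the generating product
$$
\sum_{n\ge 0} p_k(n)\,q^n \;=\; \prod_{k\nmid j}\frac{1}{1-q^j}
\;=\; \frac{\prod_{j\ge 1}(1-q^{kj})}{\prod_{j\ge 1}(1-q^j)},
$$
which, writing $q=e^{2\pi i\tau}$ with $\tau\in\H$ and using $\eta(\tau)=q^{1/24}\prod_{j\ge1}(1-q^j)$, I would rewrite as the weight-zero eta-quotient
$$
\mathcal F_k(\tau)\;:=\;\sum_{n\ge0}p_k(n)\,q^n
\;=\;q^{-(k-1)/24}\,\frac{\eta(k\tau)}{\eta(\tau)}.
$$
By Cauchy's theorem $p_k(n)=\tfrac{1}{2\pi i}\oint \mathcal F_k(\tau)\,q^{-n-1}\,dq$ over a circle just inside $|q|=1$, and the entire content of the theorem is the asymptotic, and ultimately exact, evaluation of this contour integral.

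Second, I would perform a Farey dissection of the integration circle at order $N$, replacing the contour by the Rademacher path assembled from arcs of Ford circles, one for each reduced fraction $h/m$ with $0\le h<m\le N$. Near each such rational point the integral is transported to a neighbourhood of $i\infty$ by a substitution $\tau\mapsto \frac{h}{m}+\frac{iz}{m^2}$ combined with the appropriate element of $\SL_2(\Z)$, so that the singular behaviour of $\mathcal F_k$ is controlled by the transformation law of $\eta$. The core computation is the transformation of $\eta(k\tau)/\eta(\tau)$: applying the Dedekind transformation (with its multiplier expressed through Dedekind sums and a $24$th root of unity) to \emph{both} $\eta(\tau)$ and $\eta(k\tau)$, and resolving the scaling by $k$ through the divisor $D=\gcd(k,m)$ and $M=m/D$, produces exactly the root of unity $w(h,k-1,m,D)$ recorded in the statement. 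The auxiliary quantities $g,G,B,r,U,V,T'$ are precisely the bookkeeping data required to write this combined multiplier in closed form.

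Third, summing the multipliers $w(h,k-1,m,D)$ over all $h$ coprime to $m$, weighted by the exponential factors coming from $q^{-n-1}$ and from the Fourier expansion of $\eta(k\tau)$ at the transformed cusp, assembles the Kloosterman-type sums $A(m,k-1,n,s,D)$. The index $s$ enumerates the terms of the pentagonal-number expansion $\eta(k\tau)=q^{k/24}\sum_{s}p'(s)\,q^{ks}$ after transformation, which is why the coefficients $p'(s)$ appear. Each transformed arc integral, on letting $N\to\infty$, evaluates to the modified Bessel function $I_1$ through the standard Rademacher integral — the index $1$ of the Bessel function being exactly what the weight-zero normalization predicts — and this yields the factor $L(m,k-1,n,s,D)$, while the prefactor $\tfrac{2\pi}{k}$ collects the normalizing constants from the cusp widths and the Bessel integral. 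A cusp contributes a growing (non-negligible) term only when its singular exponent is positive, i.e.\ when $J(k,D)>0$; since $D\mid k$ this occurs precisely for $D<k^{1/2}$, and the inner sum is correspondingly truncated at $0\le s<J(k,D)$, which explains both restrictions appearing in the statement.

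Finally, convergence and exactness follow from estimating the total error after the Farey dissection: using Hagis's \cite{Hagis} explicit bounds on the Bessel-function tails and on the Kloosterman sums, one shows that the contribution of arcs with large $m$, as well as the discrepancy between the Rademacher path and the true circle, tend to $0$ as $N\to\infty$, so that the asymptotic expansion is in fact a convergent identity. The main obstacle is the transformation step of the second paragraph: unlike the classical case of $1/\eta$, the presence of $\eta(k\tau)$ puts the problem on $\Gamma_0(k)$, where the cusp behaviour depends delicately on $D=\gcd(k,m)$, so that combining the two eta-multipliers under scaling by $k$ and verifying that the resulting roots of unity and phase shifts reproduce $w(h,k-1,m,D)$ exactly is the technically demanding heart of the argument.
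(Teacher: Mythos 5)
The paper contains no proof of Theorem~\ref{thm:Hagis}: the statement is imported verbatim from Hagis \cite[Theorem 3]{Hagis} and used as a black box, so the right comparison is with Hagis's original argument --- and your sketch is, in outline, exactly that argument. The circle method in Rademacher's convergent form applied to $\sum_n p_k(n)q^n = q^{-(k-1)/24}\,\eta(k\tau)/\eta(\tau)$, the Farey/Ford dissection, the eta transformation law (with the scaling by $k$ resolved through $D=\gcd(k,m)$ and $M=m/D$) producing the multipliers $w(h,k-1,m,D)$, the sums over $h$ assembling $A(m,k-1,n,s,D)$, and the standard Rademacher integral yielding $I_1$ (index $1$ because the quotient has weight $0$), with $p'(s)$ entering through the pentagonal expansion of the transformed numerator: all of this matches Hagis's proof. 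Your explanation of the two truncations is also the correct one: a cusp with $\gcd(m,k)=D$ contributes exponential growth exactly when the singular exponent is positive, i.e.\ $D^2<k$, and only the finitely many $s$ with $0\le s<J$ survive. Note in passing that the paper's printed definition $J(t,D)=((t/D)-D)/(24D)$ with $t=k-1$ cannot be literally right (it would give $J=0$ for $k=2$, $D=1$, emptying the sum); your reading, effectively $J(k,D)=((k/D)-D)/(24D)$, so that $J=(k-1)/24$ at $D=1$, is the one consistent with both the condition $D<k^{1/2}$ and the main term in Theorem~\ref{thm:Lehmer}.

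Two remarks on completeness. First, the one analytic point where this problem is genuinely harder than Rademacher's treatment of $p(n)$ is convergence: since $I_1(x)\asymp x$ as $x\to 0$, the trivial bound $|A(m,\cdot)|\le m$ only gives terms of size $m^{-1}$, so absolute convergence of the final series actually \emph{requires} a nontrivial Kloosterman estimate of Weil/Sali\'e type ($A\ll m^{5/6}$, as in Lemma~\ref{thm:Asums}); you do invoke such bounds, and since they are proved independently of the exact formula, citing Hagis for them involves no circularity. Second, as you yourself concede, the sketch defers the two technically heavy steps --- verifying that the combined eta multipliers under scaling by $k$ reproduce $w(h,t,m,D)$ exactly at every cusp class, and the explicit estimate showing the dissection error vanishes as $N\to\infty$ --- so what you have is a correct and well-motivated reconstruction of Hagis's proof strategy rather than a self-contained proof; for the purposes of this paper, which only quotes the theorem, that is the appropriate level of detail.
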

For $2 \le k \le 6$, in the summations above, we only have $s = 0$ and $D \le 2$.
Thus, the formulae needed for Theorem \ref{thm:reg-inequality}
consist of one or two of the inner sums in Theorem~\ref{thm:Hagis}.

\subsection{Estimates in the theorem of Hagis}
In this section, we obtain an asymptotic for $p_k(n)$ with an explicitly bounded error term.

Let $\alpha_k$ be defined as follows:

\begin{equation}\label{eq:alpha}
\alpha_k := \left\{
  \begin{array}{lr}
   1.8  & \mbox{ if } k=2 \\
   9.84 & \mbox{ if } k=3 \\
   1.8 \cdot 3^{\frac{1}{2}}  & \mbox{ if } k=4 \\
   14.37 & \mbox{ if } k=5 \\
   1.23 \cdot 5^{\frac{1}{2}}  & \mbox{ if } k=6
  \end{array}
\right .
\end{equation}
We also let $\alpha_6 ':= 19.68$.
\begin{theorem}\label{thm:Lehmer}
For $n\in \N$, let $\mu=\mu(n,k) := \frac{\pi ((k-1)^2 + 24n(k-1))^{\frac{1}{2}}}{6 k^{\frac{1}{2}}}$.
\begin{enumerate}
 \item  
 For $2\leq k\leq 5$ we have:
\begin{equation*}
p_k (n) = \frac{2 \pi}{k} \left( \frac{k-1}{k-1 + 24n} \right)^{\frac{1}{2}} I_1(\mu) + E_k(n)
\end{equation*}
where
\begin{equation*}
|E_k(n)| <  \frac{\alpha_k \pi}{k} \left( \frac{k-1}{(k-1)+ 24 n} \right)^{\frac{1}{2}} \frac{1}{\mu} e^{\mu} (1 + 5 \mu^2 e^{- \mu}).
\end{equation*}

 \item 
 For $k=6$ we have:
\begin{equation*}
p_6(n) = \frac{\pi}{3} \left(\frac{5}{24n + 5} \right)^{\frac{1}{2}} I_1 (\mu) + E_6(n)
\end{equation*}
where
\begin{equation*}
\begin{split}
| E_6(n) | &< \frac{\pi}{3} \left(\frac{5}{24n + 5} \right)^{\frac{1}{2}} \frac{\alpha_6}{2} \frac{e^{\mu}}{\mu} \left( 1 + \delta (n) \right)
+ \frac{\pi}{3} \left(\frac{1}{24n + 5}\right)^{\frac{1}{2}} I_1 \left( \frac{ \mu}{10^{\frac{1}{2}}} \right) .
\end{split}
\end{equation*}
\end{enumerate}

where $\delta(n)  := 5 \mu^2 e^{-\mu} + \frac{2^{\frac{4}{3}} \alpha_6'}{\alpha_6} e^{\mu \left( \frac{1}{\sqrt{10}} -1 \right)} \left( 1 + e^{- \frac{\mu}{\sqrt{10}}} \frac{\mu^2}{2} \right) $.
\end{theorem}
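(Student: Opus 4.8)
The plan is to begin from the exact expansion of Theorem~\ref{thm:Hagis}, isolate its single dominant term, and bound the entire remainder explicitly. Throughout I specialize to $2\le k\le 6$, where only $s=0$ occurs and $D\in\{1,2\}$. First I would extract the term with $D=1$, $m=1$: here $p'(0)=1$, the sum $A(1,k-1,n,0,1)$ collapses to a single root of unity of modulus $1$ (so equals $1$), and a direct substitution into \eqref{eq:Lsum} shows that the argument of $I_1$ equals exactly $\mu=\mu(n,k)$, while the prefactor $D^{3/2}m^{-1}\bigl(\frac{J-s}{n+a}\bigr)^{1/2}$ collapses to $\bigl(\frac{k-1}{k-1+24n}\bigr)^{1/2}$. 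Hence this term is precisely the displayed main term, and $E_k(n)$ is exactly the sum of all the remaining terms of the Hagis expansion. For $2\le k\le5$ only $D=1$ survives, so the remainder runs over $m\ge2$ with $\gcd(k,m)=1$; for $k=6$ the divisor $D=2<\sqrt6$ also contributes, and its smallest term $(D,m)=(2,2)$ is the one carrying $I_1(\mu/\sqrt{10})$ that is displayed separately in part~(2).

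Next I would estimate the remainder term by term. Each summand factors as $A(m,\ldots)\,L(m,\ldots)$, where $|A(m,\ldots)|\le\phi(m)$ and, by the same computation as above, the Bessel argument in $L(m,\ldots)$ is $\mu/m$ for $D=1$, hence at most $\mu/2$ on the tail; for $k=6$, $D=2$ the leading argument is $\mu/\sqrt{10}$. I would bound $I_1$ through its power series together with an elementary inequality such as $I_1(z)<\tfrac{z}{2}e^{z}$. Here lies the main obstacle: because $I_1$ has order one, it decays only like $I_1(\mu/m)\sim \mu/(2m)$, so $m^{-1}I_1(\mu/m)$ decays like $m^{-2}$, and against the trivial bound $|A(m,\ldots)|\le\phi(m)\le m$ the resulting series is a divergent harmonic sum. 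Convergence therefore forces the use of the nontrivial bounds on the Kloosterman-type sums $A(m,\ldots)$ supplied by Hagis's analysis in~\cite{Hagis}; these bounds are exactly what produce the explicit constants $\alpha_k$ of~\eqref{eq:alpha}. Summing the resulting convergent series over $m\ge2$ gives a bound of the shape $\mathrm{const}\cdot\mu^{c}e^{\mu/2}$.

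It then remains to relax this into the clean closed form of the statement. Using the elementary inequality $e^{\mu/2}<e^{\mu}/\mu$ and collecting the lower-order contributions into the factor $1+5\mu^2e^{-\mu}$, I obtain the bound in part~(1); the correction $5\mu^2e^{-\mu}$ is precisely what makes the inequality hold uniformly down to small $n$, where $e^{\mu}$ has not yet dominated the polynomial factors. For $k=6$ I would run this argument separately on the two divisor classes: the $D=1$ remainder produces the term $\tfrac{\alpha_6}{2}\tfrac{e^{\mu}}{\mu}(1+\delta(n))$, and in the $D=2$ class I would peel off the $m=2$ term, bound it by $\tfrac{\pi}{3}\bigl(\tfrac{1}{24n+5}\bigr)^{1/2}I_1(\mu/\sqrt{10})$ via $|A(2,5,n,0,2)|\le1$, and fold the remaining $m\ge4$ terms (governed by the constant $\alpha_6'$) into $\delta(n)$; after normalizing by the scale $\tfrac{\alpha_6}{2}\tfrac{e^{\mu}}{\mu}$ their $e^{\mu/\sqrt{10}}$ growth becomes the summand $\tfrac{2^{4/3}\alpha_6'}{\alpha_6}e^{\mu(1/\sqrt{10}-1)}\bigl(1+e^{-\mu/\sqrt{10}}\tfrac{\mu^2}{2}\bigr)$. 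The remaining work is careful bookkeeping: verifying that the chosen Kloosterman bounds are strong enough to make every series converge and that the accumulated constants are small enough to yield exactly the tabulated $\alpha_k$ (and $\alpha_6'$), which is the genuinely delicate part of the argument.
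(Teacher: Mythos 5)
Your proposal follows essentially the same route as the paper: isolate the $(D,m)=(1,1)$ term of Hagis's expansion as the main term (and, for $k=6$, the $(D,m)=(2,2)$ term as the displayed $I_1(\mu/\sqrt{10})$ contribution), then bound the tail by combining the power series of $I_1$ with nontrivial bounds $|A(m,k-1,n,0,D)|<\alpha_k m^{5/6}$ on the Kloosterman-type sums, which is exactly the paper's Lemma~\ref{thm:Asums} together with its integral-comparison estimate producing the factor $\frac{e^{\mu}}{\mu}\left(1+5\mu^{2}e^{-\mu}\right)$. The one substantive piece you defer --- the explicit constants $\alpha_k$ and $\alpha_6'$ --- is not simply quoted from \cite{Hagis} but is established in the paper via Weil's bound and an explicit divisor-function estimate (Lemmas~\ref{thm:divisorbound} and~\ref{thm:Asums}); you correctly identify this as the delicate step, and the rest of your outline matches the paper's argument.
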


\begin{remark}
Theorem \ref{thm:Lehmer} is analogous to \cite[(4.14)]{Lehmer} in the case of $p(n)$.
\end{remark}
To prove this theorem, we need some preparations.
The first is a bound on the divisor counting function $d(n)$.
\begin{lemma}\label{thm:divisorbound}
Let $d(n)$ denote the number of positive divisors of a positive integer $n$.
\begin{enumerate}
\item 
For all $n$, $d(n) \le 3.57 n^{\frac{1}{3}}$.

\item 
If $n$ is odd, then $d(n) \le 1.8 n^{\frac{1}{3}}$.

\item 
If $\gcd(n,3) =1$, then $d(n) \le 2.46 n^{\frac{1}{3}}$.

\item 
If $\gcd(n,5) = 1$, then $d(n) \le 3.05 n^{\frac{1}{3}}$.

\item 
If $\gcd(n,6)=1$, then $d(n) \le 1.23 n^{\frac{1}{3}}$.
\end{enumerate}
\end{lemma}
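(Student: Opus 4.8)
The plan is to exploit the \emph{multiplicativity} of the arithmetic function $f(n) := d(n)/n^{1/3}$. Since both $d(n)$ and $n^{1/3}$ are multiplicative, writing $n = \prod_p p^{a_p}$ in its prime factorization gives the exact product expansion
\begin{equation*}
\frac{d(n)}{n^{1/3}} = \prod_{p} \frac{a_p + 1}{p^{a_p/3}},
\end{equation*}
where the product effectively runs over the primes dividing $n$ (a factor equals $1$ when $a_p = 0$). Thus it suffices to understand the single-prime factor $g(p,a) := (a+1)\,p^{-a/3}$ and then to bound the product of these local factors over all $p$.

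The first key step is to show that all but finitely many primes contribute a factor $\le 1$. Indeed, for $p \ge 11$ one has $p^{1/3} \ge 11^{1/3} > 2$, so $p^{a/3} > 2^a \ge a+1$ for every $a \ge 1$, whence $g(p,a) < 1$; and $g(p,0)=1$. Hence $\max_{a \ge 0} g(p,a) = 1$ for all $p \ge 11$, and these primes may be discarded. Bounding each remaining local factor by its own maximum yields
\begin{equation*}
\frac{d(n)}{n^{1/3}} \le \prod_{p \in \{2,3,5,7\}} M_p, \qquad M_p := \max_{a \ge 0} g(p,a).
\end{equation*}

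The second step is to evaluate each $M_p$. The factor $g(p,a)$ is unimodal in $a$, since the ratio $g(p,a+1)/g(p,a) = \tfrac{a+2}{a+1}\,p^{-1/3}$ is strictly decreasing in $a$ and crosses $1$ exactly once; the integer maximum is therefore attained at the largest $a$ with $\tfrac{a+2}{a+1} \ge p^{1/3}$. This gives $M_2 = g(2,3) = 2$, $M_3 = g(3,2) = 3^{1/3}$, $M_5 = g(5,1) = 2\cdot 5^{-1/3}$, and $M_7 = g(7,1) = 2\cdot 7^{-1/3}$. Multiplying all four,
\begin{equation*}
\frac{d(n)}{n^{1/3}} \le 2\cdot 3^{1/3}\cdot 2\cdot 5^{-1/3}\cdot 2\cdot 7^{-1/3} = 8\left(\tfrac{3}{35}\right)^{1/3} = 3.527\ldots < 3.57,
\end{equation*}
which is part (1); the value is attained at $n = 2^3\cdot 3^2\cdot 5\cdot 7 = 2520$, so the constant is essentially sharp.

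Finally, each coprimality hypothesis simply removes the corresponding local maximum from the product. For $n$ odd I would drop $M_2$, obtaining $4(3/35)^{1/3} = 1.763\ldots < 1.8$ (part (2)); for $\gcd(n,3)=1$ drop $M_3$, giving $8\cdot 35^{-1/3} = 2.445\ldots < 2.46$ (part (3)); for $\gcd(n,5)=1$ drop $M_5$, giving $4(3/7)^{1/3} = 3.015\ldots < 3.05$ (part (4)); and for $\gcd(n,6)=1$ drop both $M_2$ and $M_3$, giving $4\cdot 35^{-1/3} = 1.222\ldots < 1.23$ (part (5)). The substantive points are the reduction to the finite set $\{2,3,5,7\}$ and the unimodality of $g(p,a)$; the only remaining work is the elementary verification that each product of cube roots lies below the tabulated decimal, and I expect that numerical comparison to be the only place demanding any care.
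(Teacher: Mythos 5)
Your argument is correct and is essentially the paper's own proof: both bound $d(n)/n^{1/3}=\prod_p (a_p+1)\,p^{-a_p/3}$ factor by factor, discard the primes $p\ge 11$ (whose local factors are $\le 1$), and take the local maxima at $a=3,2,1,1$ for $p=2,3,5,7$ under each coprimality constraint, then multiply. (One cosmetic slip: the maximum of $(a+1)p^{-a/3}$ is attained at one more than the largest $a$ with $\frac{a+2}{a+1}\ge p^{1/3}$, not at that $a$ itself, but the values $M_p$ you actually use are the correct maxima, so nothing downstream is affected.)
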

\begin{remark}
Actually, it is known that $d(n) = O(n^{\epsilon})$ for any $\epsilon >0$ (see ~\cite{Wigert}). However,
to prove our main theorem it is necessary that we have exact constants for the bounds. We chose these exponents and constants carefully to ease the calculations in the proof of Theorem \ref{thm:reg-inequality}.
\end{remark}
\begin{proof}
Let $n = \prod_{i =1}^M  {p_i}^{a_i}$, where each $p_i$ is prime. Then $d(n) = \prod_{i = 1}^M (1 + a_i)$. We follow the classical method in \cite{Hardy} of bounding $\prod_{i = 1}^M \frac{a_i+ 1}{p_i^{\frac{a_i}{3}}}$. For $p_i \ge 11$, we have $\frac{a_i + 1}{p_i^{\frac{ a_i}{3}}} \le 1$ for $a_i \ge 1$. For the remaining $p_i$, the quantity $\frac{a_i + 1}{p_i^{\frac{ a_i}{3}}}$ is maximized when $a_i$ is equal to $ 3,2,1$ and $1$ for $p_i$ equal to $2,3,5$ and $7$, respectively. The lemma follows by maximizing $\prod_{i = 1}^M \frac{a_i+ 1}{p_i^{\frac{a_i}{3}}}$ over $n$ which respect each of the given divisibility constraints.
\end{proof}
The next lemma is a bound on $A(m,k-1,n,0,D)$, which is related to the classical Kloosterman sums defined below;
it is a slight modification  of \cite[Theorem 12]{Lehmer}.
\begin{definition}
Let $a,b,m\in \N$. The Kloosterman sum $S(a,b,m)$ is defined by
\begin{equation*}
S(a,b,m) : = \sum_{\substack{1 \le h \le m-1 \\ \gcd(h,m) = 1}} e^{2 \pi i (a h + bh')/m},
\end{equation*}
where $h'$ is the multiplicative inverse of $h$ modulo $m$.
\end{definition}

 Weil proved the following bound (see \cite[Theorem 4.5]{Iwaniec}):
\begin{theorem}\label{thm:weilbound}
Let $a,b,m\in \N$.
\begin{equation*}
|S(a,b,m)| \le d(m) m^{\frac{1}{2}} \gcd(a,b,m)^{\frac{1}{2}}.
\end{equation*}
\end{theorem}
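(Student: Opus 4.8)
The plan is to prove the bound by reducing, via the multiplicativity of Kloosterman sums, to prime-power moduli, and then to isolate the single genuinely deep input — Weil's estimate for prime moduli. First I would record the twisted multiplicativity coming from the Chinese Remainder Theorem: if $m = m_1 m_2$ with $\gcd(m_1,m_2)=1$ and $\bar m_i$ denotes an inverse of $m_i$ modulo the complementary factor, then
$$S(a,b,m_1 m_2) = S(a\bar m_2, b\bar m_2, m_1)\, S(a\bar m_1, b\bar m_1, m_2),$$
which is obtained by writing each $h$ modulo $m$ as $h \equiv m_2\bar m_2 u + m_1 \bar m_1 v$ and splitting the exponent. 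Since $d(m)$, the factor $m^{1/2}$, and $\gcd(a,b,m)^{1/2}$ are all multiplicative along coprime factorizations of $m$ (the gcd exponent at each prime being $\min(v_p(m), v_p(\gcd(a,b)))$, unaffected by the unit twists above), it suffices to establish
$$|S(a,b,p^\lambda)| \le (\lambda+1)\, p^{\lambda/2}\, \gcd(a,b,p^\lambda)^{1/2}$$
for every prime power; multiplying these over the prime-power divisors of $m$ reproduces the claim with $d(m) = \prod(\lambda+1)$.

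For prime powers with $\lambda \ge 2$ I would use the standard elementary evaluation. Completing the sum by Hensel-type lifting (the ``stationary phase'' method) shows that $S(a,b,p^\lambda)$ is supported on the residues $h$ solving $ah \equiv bh^{-1} \pmod{p^{\lceil \lambda/2\rceil}}$ and collapses to a short sum of Gauss-sum contributions, yielding a bound of shape $2\,p^{\lambda/2}\gcd(a,b,p^\lambda)^{1/2}$, comfortably within the factor $\lambda+1$. The degenerate subcase $p\mid\gcd(a,b)$ is handled directly by factoring $p$ out of both $a$ and $b$, reducing to a Kloosterman sum of smaller modulus times the scaling that the $\gcd(a,b,p^\lambda)^{1/2}$ factor precisely records.

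The heart of the matter is the prime modulus $m=p$. Using the symmetries $S(a,b,p)=S(b,a,p)$ (substitute $h\mapsto h'$) and $S(a,b,p)=S(ac, bc^{-1}, p)$ for any unit $c$ (substitute $h\mapsto ch$), I would normalize to $S(1,ab,p)$ whenever $p\nmid ab$. If $p\mid\gcd(a,b)$ then $S(a,b,p)=p-1$ is a trivial sum already dominated by $2\,p^{1/2}\gcd(a,b,p)^{1/2}=2p$; if exactly one of $a,b$ is divisible by $p$ the sum collapses to $-1$. The remaining nondegenerate case $p\nmid ab$ is where one invokes Weil: the square-root cancellation $|S(a,b,p)|\le 2p^{1/2}$ is equivalent to the Riemann Hypothesis for the Artin--Schreier curve $y^p - y = ax + bx^{-1}$ over $\F_p$, whose associated $L$-function factors as $(1-\pi T)(1-\bar\pi T)$ with $\pi\bar\pi = p$ and $S(a,b,p) = -(\pi+\bar\pi)$; Weil's theorem forces $|\pi|=|\bar\pi|=p^{1/2}$, whence $|S(a,b,p)|\le 2p^{1/2}$, matching the required bound with $d(p)=2$.

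The single real obstacle is exactly this prime estimate $|S(a,b,p)|\le 2p^{1/2}$: every other step — the CRT multiplicativity, the explicit prime-power evaluations, and the bookkeeping of the $\gcd(a,b,m)^{1/2}$ factor through the degenerate cases — is elementary and routine. The prime bound, by contrast, lies as deep as the Weil conjectures for curves and can be reached either through Weil's algebraic-geometric proof or through the elementary polynomial method of Stepanov and Bombieri; it is this input, cited here from \cite{Iwaniec}, on which the whole estimate ultimately rests.
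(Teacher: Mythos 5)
The paper does not actually prove this statement: Theorem~\ref{thm:weilbound} is quoted as Weil's theorem with a pointer to \cite{Iwaniec}, and is used purely as a black box in the proof of Lemma~\ref{thm:Asums}. So your proposal is not so much an alternative route as the only route on offer, and it is a correct outline of the standard argument (essentially the one in the cited reference): twisted multiplicativity $S(a,b,m_1m_2)=S(a\bar m_2,b\bar m_2,m_1)S(a\bar m_1,b\bar m_1,m_2)$ via CRT, with the observation that the unit twists leave $\gcd(a,b,m_i)$ unchanged so that all three factors in the bound are multiplicative; the elementary stationary-phase evaluation at prime powers $p^\lambda$ with $\lambda\ge 2$, together with the induction $S(pa',pb';p^\lambda)=p\,S(a',b';p^{\lambda-1})$ that makes the $\gcd^{1/2}$ factor come out exactly right in the degenerate case; and Weil's bound $|S(a,b,p)|\le 2p^{1/2}$ for $p\nmid ab$ as the single deep input, with the cases $p\mid a$ or $p\mid b$ handled by Ramanujan-sum or trivial estimates. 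The one place your sketch glosses over a real (if minor) technicality is the prime $p=2$ at small exponents, where the stationary-phase count is slightly different and one should check directly that the constant stays within $\lambda+1$; this is routine but worth a line in a full write-up. What your approach buys over the paper's is simply transparency about where the depth lies --- the reduction to the Riemann Hypothesis for the Artin--Schreier curve $y^p-y=ax+bx^{-1}$ over $\F_p$ --- whereas the paper, reasonably for its purposes, buys brevity by citing the result.
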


We will use this bound in the following lemma.

\begin{lemma}\label{thm:Asums}
\begin{enumerate}
\item 
For $2 \le k \le 6$, and for all $n,m \ge 1$ with $\gcd(k,m) = 1$, we have
\begin{equation*}
|A(m,k-1,n,0,1)| < \alpha_k m^{\frac{5}{6}}.
\end{equation*}

\item 
For all $n,m \ge 1$ with $\gcd(6,m) = 2$, we have
\begin{equation*}
|A (m,5,n,0,2) | < \alpha_6' m^{\frac{5}{6}}.
\end{equation*}
\end{enumerate}
\end{lemma}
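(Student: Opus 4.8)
The plan is to start from the defining sum for $A(m,k-1,n,0,D)$ with $s=0$ and reduce it, up to a factor of modulus $1$, to one or a short combination of the classical Kloosterman sums $S(a,b,m)$, to which the Weil bound (Theorem~\ref{thm:weilbound}) applies. Concretely, for $D=1$ the sum reads
\[
A(m,k-1,n,0,1)=\sum_{\substack{h\pmod m\\ \gcd(h,m)=1}} C(h,k-1,m,1)\exp\!\left(2\pi i\frac{rUh+rVh'}{gm}-2\pi i\frac{nh}{m}\right),
\]
and the whole point is that, after clearing the denominator $gm$ and remembering that $h'$ is the inverse of $h$ modulo $gm$, the exponential factor has the shape $e^{2\pi i(ah+bh')/m}$ with $a,b$ read off from $rU-n$ and $rV$. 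Applying Theorem~\ref{thm:weilbound} then yields $|S(a,b,m)|\le d(m)\,m^{1/2}\,\gcd(a,b,m)^{1/2}$, and feeding in $d(m)\le c\,m^{1/3}$ from Lemma~\ref{thm:divisorbound} converts this into a bound of the form $\alpha_k m^{5/6}$. The exponent $5/6$ thus comes exactly from $\tfrac12+\tfrac13$.

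First I would handle the case of odd $m$. Here $g=\gcd(3,m)$ and, by the stated property, $C(h,k-1,m,1)$ is independent of $h$, so it may be pulled out as a unimodular constant, leaving a genuine Kloosterman sum. I would bound $\gcd(a,b,m)$ by a small constant depending only on $k$, using the arithmetic of $U=1-AB(t+1)$ and $V=ABT'D-1$ modulo the primes that are permitted to divide $m$ under the coprimality hypothesis $\gcd(k,m)=1$; and I would insert the divisor estimate of Lemma~\ref{thm:divisorbound} matched to that hypothesis (part~(2) for $k=2,4$, part~(3) for $k=3$, part~(4) for $k=5$, part~(5) for $k=6$). Multiplying the divisor constant by the square root of the gcd bound reproduces the displayed values of $\alpha_k$: for instance $1.8$ with gcd bound $1$ for $k=2$, $1.8\cdot 3^{1/2}$ for $k=4$, and $1.23\cdot 5^{1/2}$ for $k=6$, where the $3^{1/2}$ and $5^{1/2}$ record that the gcd can reach $3$ or $5$ respectively.

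The even case, which occurs for $k=3,5$ in part~(1) and is forced throughout part~(2), is the main obstacle and is the source of the larger constants $\alpha_3$, $\alpha_5$, and $\alpha_6'$. For even $m$ one has $g=8\gcd(3,m)$ and $C(h,\ldots)$ is only constant on the residue classes $h\equiv d\pmod 8$ with $d$ odd. Following Lehmer's treatment, I would split the sum over the four classes indexed by $(\Z/8\Z)^\times$, detect each class by the characters of that group, and thereby express each piece as a bounded combination of Kloosterman sums of modulus $m$ to which Theorem~\ref{thm:weilbound} again applies; the number of classes together with the character detection is what multiplies the odd-case constant by the bounded factor (giving the factor behind $\alpha_3=4\cdot 2.46$, and, for part~(2) with $D=2$, $t=5$, $M=m/2$, the factor behind $\alpha_6'=8\cdot 2.46$). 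The two delicate points are (i) verifying that $C(h,\ldots)$ really is constant on the asserted classes, so that the Weil bound may legitimately be applied piecewise, and (ii) pinning down the admissible values of $\gcd(a,b,m)$ in each case. Finally I would check, over the finitely many cases $k\in\{2,3,4,5,6\}$ and the single $D=2$ case, that the product of the divisor constant, the gcd bound, and the mod-$8$ splitting factor stays strictly below the tabulated $\alpha_k$ and $\alpha_6'$, which gives the strict inequalities as claimed.
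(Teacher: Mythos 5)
Your plan is essentially the paper's own proof: rewrite $A(m,k-1,n,0,D)$ as (a bounded combination of) ordinary Kloosterman sums by pulling out the unimodular factor $C$, apply the Weil bound together with the tailored divisor estimates of Lemma~\ref{thm:divisorbound}, control $\gcd(Ur-gn,rV,gm)$ via the congruences defining $U$ and $V$, and for even $m$ split over the classes $h\equiv d\pmod 8$ (the paper does the detection via Sali\'e's formula, i.e.\ additive characters mod $8$, producing the shifted sums $S(Ur-gn,\,Vr+\tfrac{wgm}{8};\,gm)$). Your accounting of where each constant $\alpha_k$ and $\alpha_6'$ comes from matches the paper's computation, so no substantive difference to report.
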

\begin{proof} We will follow Hagis' argument in \cite[Theorem 2]{Hagis}.
Our strategy is to rewrite $A(m,k-1,n,0,D)$ as a sum of ordinary Kloosterman sums and apply
Theorem~\ref{thm:weilbound}.

In order to bound the ordinary Kloosterman sums, we will need to be able to bound certain greatest common divisors.
We use the notation introduced at the beginning of the section,
and we begin by stating a series of bounds for
$\gcd(Ur - gn, rV, gm)$ and $\gcd(Ur - gn, rV + \frac{wgm}{8}, gm)$ which depend on $k$ and $D$.
These are straightforward to verify from their definitions.

For $D = 1$, $ 2 \le k \le 6$ we have $\gcd(r, gm) = 1 $ and $\gcd(k,gm) = 1$, thus \\
$\gcd(rV, gm) = \gcd(kV,gm)$. Then since $kV = k(T' - 1)  \equiv 1-k \pmod{gm}$, we have
\[ \gcd(rV, gm)  = (1-k, gm) \le k-1. \]

Let $k= 3,5$, let $D = 1$, and let $m$ be even. Note that $\gcd(r,g) = 1$ and $U= k-1$, which implies
$\gcd(Ur - gn , g) = \gcd(k-1, g)$. Also for $1 \le w \le 8$, we have
\[\gcd(rV + \frac{wgm}{8} , m) = \gcd(V, m) = \gcd( 1-k,m). \]
Therefore $\gcd(Ur -gn, rV + \frac{wgm}{8} , gm)$ divides $(k-1)^2$, so it must be $1,2,4,8$, or 16.
However, the highest power of $2$ that $Ur - gn$ can be divisible by is $k-1$, because $g$ is divisible by $8$, and $r$ is odd, and $Ur - gn = r (1-k) - gm$. Thus we have:
\begin{equation*}
\gcd(Ur - gn, Vr + \frac{wgm}{8}, gm) \le k-1.
\end{equation*}

For the last bound, we let $k=6$ and $D=2$. Then we have $g= 8$, $T = 3$, $M = \frac{m}{2}$, and $\gcd(6,m) = 2$.
So $\gcd(rV + \frac{wgm}{8}, m) = \gcd(V, m) = \gcd(2 ABT' - 1,m)$. Now we have \\ $2 AB \equiv 2 \pmod{m}$ and $6T' \equiv 2 \pmod{m}$, thus \[ \gcd(V,m) = \gcd(2T' - 1,m) = \gcd(3(2 T' - 1),m )  = 1. \] Therefore we have $\gcd( rU - gn, rV + \frac{wgm}{8},gm)\le g = 8.$

To use these bounds, we rewrite $A(m,k-1,n,0,D)$ as a sum over a reduced residue class modulo $gm$:
\begin{equation*}
A(m,k-1,n,0,D) =
\frac{1}{g} \sum_{\substack{h \mod{m} \\ \gcd(h,m) = 1}} C(h,k-1,m,D) \exp{(2 \pi i ((Ur-gn)h + rVh')/gm))}.
\end{equation*}
For odd $m$, $C(h,k-1,m,D)$ does not depend on $h$. Therefore we have
\begin{equation*}
A(m,k-1,n,0,1) = C(1,k-1,m,1)  \frac{1}{g} \sum_{\substack{h \mod{m} \\ \gcd(h,m) = 1}} \exp(2 \pi i  ((rU-gn)h + rVh')/gm) .
\end{equation*}
The sum on the right is an ordinary Kloosterman sum, so by Theorem \ref{thm:weilbound} we have, for all odd~$m$:
\begin{equation*}
|A(m,k-1,n,0,1)|  = |S(Ur - gn, rV, gm)| \le \frac{1}{g} d(gm) \gcd(Ur - gn, rV, gm)^{\frac{1}{2}} (gm)^{\frac{1}{2}}.
\end{equation*}
Then by Lemma \ref{thm:divisorbound} and the bounds at the beginning of the proof, it follows that for all $m$ such that $2 \nmid m$ and $\gcd(k,m) = 1$, we have:
\begin{equation*}
|A(m,k-1,n,0,1)| \le (k-1)^{\frac{1}{2}} \cdot 1.8 \cdot m^{\frac{5}{6}}.
\end{equation*}
This proves the lemma for $k=2,4$, and for $k=3,5$ in the case of $m$ being odd.
Similarly, for $k=6$, Lemma \ref{thm:divisorbound}, we have:
\begin{equation*}
|A(m,5,n,0,1)| \le (k-1)^{\frac{1}{2}} \cdot 1.23  \cdot m^{\frac{5}{6}}.
\end{equation*}
For $k=6$, $D=1$, the proof is complete.

If $m$ is even, we write
\begin{align*}
A(m,k-1,n, 0, D) &= A_1(m,k-1,n,0,D) + A_3(m,k-1,n,0,D) \\
&\qquad + A_5(m,k-1,n,0,D) + A_7(m,k-1,n,0,D),
\end{align*}
where
\begin{equation*}
A_d (m,k-1,n,0,D) = \frac{1}{g} \sum_{\substack{h \mod{gm}, \\ h \equiv d \mod{8}, \\ \gcd(h,m) = 1}} C(h,k-1,m,D) \exp{(2 \pi i  ((rU-gn)h +rVh')/gm)}.
\end{equation*}
Over each $d$, the coefficient $C(h,k-1,m,D)$ does not depend on $h$, so
\begin{equation*}
A_d (m,k-1,n,0,D) = C(d,k-1,m,D) \frac{1}{g} \sum_{\substack{h \pmod{gm},  \\ h \equiv d \pmod{8},\\ \gcd(h,m) = 1 }} \exp{(2 \pi i  ((rU-gn)h +rVh')/gm)}.
\end{equation*}
By the formula on page 266 of \cite{Salie}, for $d d' \equiv 1 \pmod{8}$, we have:
\begin{equation*}
A_d (k-1,m,n,0,D) = \frac{1}{8g} C(d,k-1,m,D) \sum_{w = 1}^8 e^{2\pi i \frac{d' w}{8}}  S(Ur-gn, Vr + \frac{w gm}{8}; gm).
\end{equation*}
By Theorem \ref{thm:weilbound},
\begin{equation*}
A_d (m,k-1,n,0,D) = \frac{1}{8g} C(d,k-1,m,D) \sum_{w = 1}^8 e^{- \frac{2 \pi i}{8} d'w} \gcd(Ur-gn, Vr + \frac{w gm}{8},gm)^{\frac{1}{2}} d(gm) (gm)^{\frac{1}{2}}.
\end{equation*}
For $k=3$, by the bounds at the beginning of the proof we have:
\begin{equation*}
A_d(m,2,n,0,1) \le 8 \cdot \frac{1}{8g} \cdot 2^{\frac{1}{2}}\cdot 2.46 (gm)^{\frac{1}{3}} \cdot (gm)^{\frac{1}{2}} \le 2.46 m^{\frac{5}{6}}.
\end{equation*}
Similarly for $k=5$, if $3 | m$, by our previous bounds we have:
\begin{equation*}
A_d (m,4,n,0,1) \le  8 \cdot \frac{1}{8 \cdot 24} \cdot 4^{\frac{1}{2}} \cdot 3.05 \cdot (24m)^{\frac{1}{3}} \cdot (24m)^{\frac{1}{2}} \le 3.592 m^{\frac{5}{6}}.
\end{equation*}
If $3 \nmid m$, then we have:
\begin{equation*}
|A_d (m,4,n,0,1)| \le  8 \cdot \frac{1}{8\cdot 8} \cdot 4^{\frac{1}{2}} \cdot 2.46 \cdot (8m)^{\frac{1}{3}} \cdot (8m)^{\frac{1}{2}} \le 3.48 m^{\frac{5}{6}} .
\end{equation*}
We note that $|A(m,k-1,n,0,D)| \le 4 |A_d (m,k-1,n,0,D)|$. Comparing these bounds to the bounds in the odd $m$ case, we conclude that for $k = 3,5$, the desired bound holds whenever $\gcd(m,k) = 1$.

For $\gcd(6,m) = 2$, we have:
\begin{equation*}
|A(m,5,n,0,2) | \le 4 |A_d (m,5,n,0,2)| \le 4 \cdot (8\cdot 8^{\frac{1}{2}} \cdot \frac{1}{8g}  \cdot 2.46 (gm)^{\frac{1}{3}} \cdot (gm)^{\frac{1}{2}}) \le 19.6 m^{\frac{5}{6}}.
\end{equation*}
This completes the proof.
\end{proof}

\bigskip

Now we come to the {\bf proof of Theorem \ref{thm:Lehmer}}.
For $2 \le k \le 5$, Theorem \ref{thm:Hagis} says
\begin{equation}\label{eq:hagis1}
p_k(n) = \frac{2 \pi}{k} \sum_{\substack{m=1\\ \gcd(k,m) = 1}}^{\infty} m^{-1} \left(\frac{k-1}{(k-1)+ 24 n} \right)^{\frac{1}{2}} A(m,k-1,n,0,1) I_1 \left( \frac{\mu}{m} \right),
\end{equation}
and for $k=6$, Theorem \ref{thm:Hagis} says
\begin{equation}\label{eq:hagis2}
\begin{split}
p_6(n) = &\frac{\pi}{3} \frac{5^{1/2}}{(5 + 24n)^{\frac{1}{2}}} \sum_{m= 1}^{\infty} \frac{1}{m} A(m,5,n,0,1) I_1 \left(\frac{\mu}{m} \right) \\
&+ \frac{\pi}{3} \frac{1}{(5 + 24n)^{\frac{1}{2}}} \sum_{(3,a) = 1}^{\infty} \frac{1}{a} A(2a,5,n,0,2) I_1 \left( \frac{ \mu}{10^{\frac{1}{2}} a} \right).
\end{split}
\end{equation}

Let $\alpha = \frac{1}{6}$. Our proof works by bounding the sums in~(\ref{eq:hagis1}) and~(\ref{eq:hagis2}).
We have, for any $\nu \neq 0$,
\begin{align*}
|\sum_{m= N+1}^{\infty} m^{-1} A(m,k-1,n,0,1) I_1 \left(\frac{\nu}{m} \right)|  &\le \sum_{m= N+1}^{\infty} \alpha_k m^{- \alpha} \sum_{j=0}^{\infty} \frac{ (\frac{\nu}{2m})^{2j+1}}{j! (j+1)!} \\
&< \alpha_k \int_N^{\infty} x^{-\alpha} \sum_{j=0}^{\infty} \frac{ (\frac{\nu}{2x})^{2j+1}}{j! (j+1)!} \D{x}.
\end{align*}
We substitute $t = \frac{\nu}{2x}$.
\begin{align*}
|\sum_{m= N+1}^{\infty}m^{-1} A(m,k-1,n,0,1) I_1 \left(\frac{\nu}{m} \right)| &\le \alpha_k  \int_0^{\frac{\nu}{2N}} (\frac{\nu}{2t})^{-\alpha} \sum_{j=0}^{\infty} \frac{t^{2j+1}}{j! (j+1)!} \frac{\nu}{2t^2} \D{t} \\
&= \alpha_k  (\frac{\nu}{2})^{1 - \alpha} \int_0^{\frac{\nu}{2N}} \sum_{j=0}^{\infty} \frac{(t^{2j -1 +\alpha})}{j! (j+1)!} \D{t} \\
&\le \alpha_k \ (\frac{\nu}{2})^{1 - \alpha}  \sum_{j=0}^{\infty} \frac{(\frac{\nu}{2N})^{2j+\alpha}}{j! (j+1)! (2j + \alpha)} \\
&\le \alpha_k  (\frac{\nu}{2})^{1- \alpha}  \left(\frac{ (\frac{\nu}{N})^{\alpha}}{2 \alpha} + \sum_{j=2}^{\infty} \frac{((\frac{\nu }{N})^{2j-2 + \alpha})}{(2j)!}  \right)2^{1 - \alpha}\\
&\le \alpha_k N^{2 + \alpha} \frac{1}{\nu} \left( \cosh(\nu/N) -1 + \frac{5}{2} \left(\frac{\nu}{N} \right)^2 \right). \\
\end{align*}

To bound $\sum_{a = N+1}^{\infty} (2a)^{-1} A(2a,5,n,0,2) I(\frac{\nu}{2a})$, we replace $\alpha_6$ with $\alpha_6'$ in the previous argument.
To complete the proof, we let $N = 1$, and apply the above inequality to the sums in Theorem~\ref{thm:Hagis}, where $\nu=\mu$ for $2 \le k \le 5$,
and for $k = 6$, $\nu$ is set to be $\mu$ and $\frac{\mu}{\sqrt{10}}$ in the first and second sum, respectively.
\qed

\subsection{Proof of Theorem \ref{thm:reg-inequality}}\label{sec:proof-reg-inequality}
Our proof is analogous to the proof of  \cite[Theorem 2.1]{BesOno}.

By well known properties of Bessel functions, such as the bounds in
(9.8.4) of \cite{Abram}, for $x \ge 37.5$ the modified Bessel function $I_1(x)$
is bounded by
\begin{equation*}
N \le x^{\frac{1}{2}} e^{-x} I_1(x) \le M
\end{equation*}
where $N = 0.394$, $M = 0.399$.

First, let $2 \le k \le 5$, and let $\beta := \frac{\alpha_k}{2}$. Then by Theorem \ref{thm:Lehmer}, for $n \ge 450$ we have:
\begin{align*}
\frac{2 \pi}{k} \left( \frac{k-1}{k-1 + 24n} \right)^{\frac{1}{2}} & \left( N - \frac{\beta}{\sqrt{\mu}} \left( 1 + 5 \mu^2 e^{-\mu} \right) \right) \frac{e^{\mu}}{\sqrt{\mu}} < p_k(n) \\
&< \frac{2 \pi}{k}  \left(\frac{k-1}{k-1 + 24n}\right)^{\frac{1}{2}} \frac{e^{\mu}}{\sqrt{\mu}} \left(M + \frac{\beta}{\sqrt{\mu}}\left( 1 + 5 \mu^2 e^{-\mu} \right) \right).
\end{align*}
We assume $a \le b$ and write $b = \lambda a$ for some $\lambda \ge 1$. Then it is sufficient to show
\begin{equation*}\label{lambda}
e^{\mu(a) + \mu(\lambda a) - \mu(\lambda a + a)} > S_{a,k}(\lambda) (k-1 + 24a)^{\frac{3}{4}},
\end{equation*}
where
\begin{equation*}
S_{a,k}(\lambda) :=C_k \frac{ \left(M+ \frac{\beta}{\sqrt{\mu(\lambda a +  a)} }\left( 1 + 5 \mu(\lambda  a+ a)^2 e^{-\mu (\lambda a + a)} \right)  \right) }{\left( N - \frac{\beta}{\sqrt{\mu( \lambda a )}}\left( 1 + 5 \mu(\lambda a)^2 e^{-\mu(\lambda a)} \right) \right) \left(  N - \frac{\beta}{\sqrt{\mu(a)}}  \left( 1 + 5 \mu(a)^2 e^{-\mu (a)} \right)\right) },
\end{equation*}
for $C_k := \frac{k^{\frac{3}{4}}}{2 (\pi(k-1))^{\frac{1}{2}}}$.
For a fixed $a$, the left-hand side of the inequality is increasing for all $\lambda \ge 1$, and the right-hand side is decreasing. Thus, for any given $a$, to prove Theorem \ref{thm:reg-inequality} for $b \ge a$, it suffices to verify the inequality for $\lambda =1$. Taking the natural logarithm of each side, it is straightforward to verify that the inequality holds for $a \ge 1000$ for $k=2,4$, and holds for $a \ge 5 \cdot 10^4$ for $k=3,5$. Then for each of the remaining $a$, we wish to find $\lambda_{a,k}$ such that for $\lambda \ge \lambda_{a,k}$:
\begin{align*}
p_k(a) & \frac{2 \pi}{k} \left( \frac{k-1}{k-1 + 24\lambda a} \right)^{\frac{1}{2}} \left( N - \frac{\beta}{\sqrt{\mu (\lambda a)}} \left( 1 + 5 \mu(\lambda a)^2 e^{-\mu (\lambda a)} \right) \right) \frac{e^{\mu (\lambda a)}}{\sqrt{\mu( \lambda a)}}  > \\
& \frac{2 \pi}{k}  \left(\frac{k-1}{k-1 + 24(\lambda a + a)}\right)^{\frac{1}{2}} \frac{e^{\mu (\lambda a + a)}}{\sqrt{\mu (\lambda a+ a)}} \left(M + \frac{\beta}{\sqrt{\mu (\lambda a + a)}}\left( 1 + 5 \mu( \lambda a + a)^2 e^{-\mu (\lambda a + a)} \right) \right) .
\end{align*}
For $a \ge 20$, $k = 2,4$, $\lambda_{a,k} = \frac{1000}{a}$ suffices. For $a \le 20$, $k = 3,5$, $\lambda_{a,k} = \frac{50000}{a}$ suffices. For smaller $a$, the needed $a \lambda_{a,k} $ values can be as large as $4 \cdot 10^5$, except when $k=5$ and $a=2$, where the larger bound in Theorem \ref{thm:Asums} for $k=5$ causes the needed $\lambda_{a,k}$ values to be much larger. All other cases are reduced to checking a large but finite number of pairs $(a,b)$, where $a \le 5\cdot 10^4$ and $b \le \lambda_{a,k} a$. We carried out these calculations using Sage mathematical software \cite{Sage}. To ease our calculation,
we proved the inequality $ p_5(2) \cdot p_5(b) > p_5(b + 2)$  for $b \ge 75$ with a combinatorial argument (see the end of the section), and used Sage \cite{Sage} to check the remaining pairs.

Now we handle the $k=6$ case. This case is very similar to the cases for
$2 \le k \le 5$, but because of the second summation in (\ref{eq:hagis2}), we have additional, non-dominant terms in our expressions. Using Theorem \ref{thm:Lehmer} and factoring out the leading term, we obtain
\begin{align*}
\frac{\pi}{3} \frac{\sqrt{5}}{\sqrt{24n+5}} \frac{e^{\mu}}{\sqrt{\mu_6}} &\left( N \left(1 -  \eta(n) \right) - \frac{\beta}{\sqrt{\mu}} \left( 1 + \delta(n) \right) \right)
< p_6(n) \\
&< \frac{\pi}{3} \frac{\sqrt{5}}{\sqrt{24n+5}} \frac{e^{\mu}}{\sqrt{\mu_6}} \left( M \left(1 + \eta(n) \right) + \frac{\beta}{\sqrt{\mu}} \left( 1 + \delta(n) \right) \right),
\end{align*}
where $\eta(n) := \left( \frac{2}{5}\right)^{\frac{1}{4}} e^{ \mu \left( 10^{- \frac{1}{2}} - 1 \right) }$.
The desired inequality is implied by
\begin{equation*}
e^{\mu(a) + \mu(\lambda a) - \mu(\lambda a + a)} > S_{a,k}(\lambda) (k-1 + 24a)^{\frac{3}{4}},
\end{equation*}
where
\begin{equation*}
S_{a}(\lambda) =  C_6 \frac{ \left( M( 1 + \eta(\lambda a + a) ) +\frac{\beta}{ \sqrt{\mu((\lambda +1)a)}} \left(1  +  \delta(\lambda a + a) \right)  \right)}{\left(N ( 1 - \eta(a))  - \frac{\beta}{\sqrt{\mu(\lambda a)}}\left(1 +\delta (\lambda a) \right) \right) \left( N( 1 - \eta(a)) - \frac{\beta}{2 \sqrt{\mu_6(a)}}\left(1 +\delta(a) \right) \right) },
\end{equation*}
and $C_6 =  \frac{3 }{ \pi \sqrt{5}} (\frac{6^{\frac{3}{2}}}{\sqrt{5}\pi})^{\frac{1}{2}}$. As before, it suffices to verify that this is true for $\lambda  = 1$, which is straightforward for $a \ge 3500$.
Then for each $a \le 3500$, we wish to find $\lambda_{a,6}$ such that for  all $\lambda \ge \lambda_{a,6}$,
\begin{align*}
p_6(a) &\frac{\pi}{3} \frac{\sqrt{5}}{\sqrt{24(\lambda a)+5}} \frac{e^{\mu(\lambda a)}}{\sqrt{\mu(\lambda a)}} \left(  N( 1 - \eta(\lambda a)) - \frac{\beta}{\sqrt{\mu (\lambda a) }}\left(1  + \delta(\lambda a) \right) \right) > \\
&\frac{\pi}{3} \frac{\sqrt{5}}{\sqrt{24(\lambda a +a)+5}} \frac{e^{\mu(\lambda a + a)}}{\sqrt{\mu ((\lambda + 1)a)}}  \left( M(1 + \eta(\lambda a + 1))+ \frac{\beta}{\sqrt{\mu(\lambda a + a)}} \left( 1 + \delta(\lambda a + a) \right) \right) .
\end{align*}
 It is straightforward to verify that the inequality holds for $\lambda \ge \frac{3500}{a}$ for all $a \ge 4$. For $a = 2,3,4$, the inequality holds for $\lambda \ge \frac{50000}{a}$. This reduces the $k=6$ case to a finite number of pairs $(a,b)$ to check, which we computed with Sage \cite{Sage}.

Finally, we prove that for $b \ge 75$, we have $p_5(b + 2) < 2 \, p_5(b)$.
To do this, we separate the 5-regular partitions of $b+2$ into two disjoint sets.
Let $S_1$ be the set of 5-regular partitions of $b+2$ which contain 1 as a part with multiplicity at least two.
Let $S_2$ contain all the other 5-regular partitions of $b + 2$.
Let $S$ be the set of 5-regular partitions of $b$.
We map $S_1$ and $S_2$ each injectively into $S$.
To map $S_1$ injectively into $S$, for each partition in $S_1$, simply remove two parts~1.

Next, we define an injective map from $S_2$ into $S$.
Let $\ga=(\ga_1,\ga_2,\ldots,\ga_\ell)$ be a partition in~$S_2$.
If $\ga_\ell \ge 2$ and $\ga_1 \ge 7$, then $\ga$ is mapped to $(\ga_2, \ldots,\ga_\ell,1^{\ga_1-2})$
(here, we use exponential notation for multiplicities).
If $\ga_\ell \ge 2$ and $\ga_1 < 7$, then if 2 has multiplicity at least 5 in $\ga$,
replace five parts~2 with eight parts~1.
Otherwise, if $\ga$ has five parts 3, we replace them with thirteen parts~1.
If $\ga$ has five parts 4, then we replace them with eighteen parts~1.
Otherwise, $\ga$  must have at least five parts 6, which we replace with 28 parts~1.
Finally, assume $\ga_\ell = 1$.
If $\ga_{\ell-1} \equiv 1 \pmod{5}$, then we map $\ga$ to
$(\ga_1, \ldots,\ga_{\ell-2},\ga_{\ell-1}-4,1^3)$.
Otherwise,  $\ga$ is mapped to $(\ga_1, \ldots,\ga_{\ell-2},\ga_{\ell-1}-1)$.
Note that the mapping from $S_2$ to $S$
is not onto by considering any 5-regular partition of $b$ which contains exactly two ones.
Thus we obtain the inequality $p_5(b + 2) < 2 p_5(b)$ for $b \ge 75$.

This completes the proof of the inequality stated in Theorem~\ref{thm:reg-inequality}.

The exceptional pairs given in the table are then easily obtained by direct computations.
\qed

\bigskip

\section{The maximum property}\label{sec:max}

We first recall \cite[Theorem 1.1]{BesOno}.
\begin{theorem}\label{thm:BO-max}
Let $n\in \N$. For $n\geq 4$ and $n\neq 7$,
the maximal value $\maxp(n)$
of the partition function on $P(n)$
is attained exactly at the partitions (in exponential notation)
$$
\begin{array}{ll}
(4^{\frac n4 }) & \text{when } n \equiv 0 \pmod 4\\
(5,4^{\frac{n-5}4}) & \text{when } n \equiv 1 \pmod 4\\
(6,4^{\frac{n-6}4}) & \text{when } n \equiv 2 \pmod 4\\
(6,5,4^{\frac{n-11}4}) & \text{when } n \equiv 3 \pmod{4}
\end{array}
$$
For $n=7$, the maximal value is $\maxp(7)=15$,
attained at the two partitions $(7)$ and $(4,3)$.

In particular, if $n\geq 8$, then
$$
\maxp(n)=\begin{cases}
5^{\frac{n}{4}}\ \ \ \ \ \ \ \
 & \text {\rm if $n\equiv 0\pmod 4$},\\
7\cdot 5^{\frac{n-5}{4}}\ \ \ \ \ \ \ \ &\text {\rm if $n\equiv 1\pmod 4$},\\
11\cdot 5^{\frac{n-6}{4}}\ \ \ \ \ \ \ \ &\text {\rm if $n\equiv 2\pmod 4$},\\
11\cdot7\cdot 5^{\frac{n-11}{4}}\ \ \ \ \ \ \ \ &\text {\rm if $n\equiv 3\pmod 4$}.
\end{cases}
$$
\end{theorem}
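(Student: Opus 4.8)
The plan is to use the inequality $p(a)p(b)>p(a+b)$ recalled in the introduction (valid for $a,b\ge 2$ with $a+b\ge 10$), together with a short list of exact values $p(1)=1,\ p(2)=2,\ p(3)=3,\ p(4)=5,\ p(5)=7,\ p(6)=11,\ p(7)=15,\ p(8)=22,\ p(9)=30$, to perform a sequence of local exchange moves on a maximizing partition $\mu$, each of which can only increase $p(\mu)$. The guiding heuristic is the efficiency comparison $p(4)^{1/4}=5^{1/4}>p(m)^{1/m}$ for every $m\neq 4$, which predicts that a maximizer should consist mostly of parts equal to $4$, with small corrections dictated by $n\bmod 4$.

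First I would clear out large and wasteful parts. Because $p(1)=1$ and $p$ is strictly increasing, merging a part $1$ into any other part strictly increases $p(\mu)$, so for $n\ge 2$ a maximizer has no part equal to $1$. Any part $m\ge 10$ splits as $m=4+(m-4)$ with $p(4)p(m-4)>p(m)$ by the recalled inequality, and the direct checks $p(4)^2=25>22=p(8)$ and $p(4)p(5)=35>30=p(9)$ remove the parts $8$ and $9$; hence every part of a maximizer lies in $\{2,3,4,5,6,7\}$.

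Next I would bound multiplicities and eliminate mixed small parts by exchanges. The inequalities $p(2)^2=4<p(4)$, $p(3)^2=9<p(6)$, $p(5)^2=49<p(4)p(6)=55$ and $p(6)^2=121<p(4)^3=125$ force at most one part each of $2,3,5,6$, and $p(7)=p(4)p(3)=15$ lets me rewrite any $7$ as $(4,3)$. Further moves strip the residue down to a canonical form: $p(2)p(4)=10<p(6)$ absorbs a lone $2$, $p(2)p(3)=6<p(5)$ merges a $\{2,3\}$ pair, $p(3)p(5)=21<p(4)^2$, $p(3)p(6)=33<p(4)p(5)=35$, $p(2)p(5)=14<p(4)p(3)$ and $p(2)p(6)=22<p(4)^2$ rule out the remaining illegal combinations, while the crucial comparison $p(6)p(5)=77>75=p(3)p(4)^2$ upgrades a stray $3$ (together with two spare $4$'s) into the pair $(6,5)$. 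What remains is a block of $4$'s plus a residue correction lying in $\{\,\emptyset,\ (5),\ (6),\ (6,5)\,\}$; matching the correction to $n\bmod 4$, and confirming the winners by $p(5)=7>6=p(9)/p(4)$ (residue $1$), $p(6)=11>10=p(2)p(4)$ (residue $2$) and $77>75$ (residue $3$), yields exactly the four asserted families.

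Finally I would treat the one genuinely exceptional case. For $n\ge 4$ the residue normal form is defined except when $n\equiv3\pmod 4$ and $n<11$, i.e.\ only $n=7$, so for $n=7$ I would simply enumerate $P(7)$ and observe that $(7)$ and $(4,3)$ both attain the value $15$ while all other partitions give less. I expect the main obstacle to be the \emph{exactness} (uniqueness) claim rather than the existence of the maximum: every exchange above must be shown to be strictly increasing except for the single equality $p(7)=p(4)p(3)$, and one must check that this lone equality is harmless for $n\neq 7$, since any surviving $7$ would occur alongside further parts whose repartition strictly improves the product (for instance $(7,4^{(n-7)/4})$ has value $75\cdot 5^{(n-11)/4}<77\cdot 5^{(n-11)/4}$ for $n\ge 11$). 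Establishing strictness uniformly, so that for every $n\ge 8$ the displayed partition is the unique maximizer, is the delicate bookkeeping at the heart of the argument.
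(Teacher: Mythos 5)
Your proposal is correct: the arithmetic checks out, the exchange moves do reduce any maximizer to the four stated families, and you correctly isolate the single equality $p(7)=p(4)p(3)$ as the source of the $n=7$ exception. Note that the paper itself does not prove this statement but only recalls it from \cite[Theorem 1.1]{BesOno}; your method --- the inequality $p(a)p(b)>p(a+b)$ to kill large parts, followed by finitely many local replacement arguments on small parts --- is essentially the same strategy used in that reference and mirrored in this paper's proof of Theorem~\ref{thm:max-regular}.
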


Since the partitions where the maximum of $p(n)$ is attained
on $P(n)$ are $k$-regular for any $k>6$, in the following
it suffices to consider the cases $k\in \{2,3,4,5,6\}$.

\begin{theorem}\label{thm:max-regular}
Let $k\in \N$, $k>1$. Let $n\in \N$.

\begin{enumerate}
\item[(i)] $k=2$.
For $n\geq 9$ and $n\neq 11$,
the maximal value $\maxp_2(n)$
of $p_2(n)$ on $P_2(n)$
is attained exactly at the partitions
$$
\begin{array}{ll}
(9^a,3^b) & \text{when } n \equiv 0 \pmod 3 \\
(9^a,7,3^b) & \text{when } n \equiv 1 \pmod 3 \\
(9^a,7,7,3^b) & \text{when } n \equiv 2 \pmod 3 \\
\end{array}
$$
where $a,b\in \N_0$ may be chosen arbitrarily
as long as we have partitions of~$n$.
\\
In particular, we have
$$
\maxp_2(n)= \left\{
\begin{array}{ll}
2^{\frac n3} & \text{when } n \equiv 0 \pmod 3\\
5\cdot 2^{\frac {n-7}3} & \text{when } n \equiv 1 \pmod 3\\
5^2\cdot 2^{\frac {n-14}3} & \text{when } n \equiv 2 \pmod 3\\
\end{array}
\right.
$$
\item[(ii)] $k=3$.
For $n\geq 2$ and $n\neq 3$,
the maximal value $\maxp_3(n)$
of $p_3(n)$ on $P_3(n)$
is attained exactly at the partitions
$$
\begin{array}{ll}
(4^a,2^b) & \text{when } n \equiv 0 \pmod 2\\
(5,4^a,2^b) & \text{when } n \equiv 1 \pmod 2\\
\end{array}
$$
where $a,b\in \N_0$ may be chosen arbitrarily
as long as we have partitions of~$n$.
\\
In particular, we have
$$
\maxp_3(n)= \left\{
\begin{array}{ll}
2^{\frac n2} & \text{when } n \equiv 0 \pmod 2\\
5\cdot 2^{\frac {n-5}2} & \text{when } n \equiv 1 \pmod 2\\
\end{array}
\right.
$$
\item[(iii)] $k=4$.
For $n\geq 2$,
the maximal value $\maxp_4(n)$
of $p_4(n)$ on $P_4(n)$
is attained exactly at the partitions
$$
\begin{array}{ll}
(6^a,3^b) & \text{when } n \equiv 0 \pmod 3\\
(6^a,3^b,2,2),(7,6^a,3^b),(6^a,5,3^b,2),
(6^a,5,5,3^b) & \text{when } n \equiv 1 \pmod 3\\
(6^a,3^b,2) , (6^a,5,3^b)& \text{when } n \equiv 2 \pmod 3\\
\end{array}
$$
where $a,b\in \N_0$ may be chosen arbitrarily
as long as we have partitions of~$n$,
and with the understanding that
partitions with given parts $2,5,7$ of positive multiplicity
do not occur when $n$ is too small.
\\
In particular, we have
$$
\maxp_4(n)= \left\{
\begin{array}{ll}
3^{\frac n3} & \text{when } n \equiv 0 \pmod 3\\
4\cdot 3^{\frac {n-4}3} & \text{when } n \equiv 1 \pmod 3\\
2\cdot 3^{\frac {n-2}3} & \text{when } n \equiv 2 \pmod 3\\
\end{array}
\right.
$$
\item[(iv)] $k=5$.
For $n\geq 2$,
the maximal value $\maxp_5(n)$
of $p_5(n)$ on $P_5(n)$
is attained exactly at the partitions
$$
\begin{array}{ll}
(4^{\frac n4 }) & \text{when } n \equiv 0 \pmod 4\\
(4^{\frac{n-5}4},3,2),(6,4^{\frac{n-9}4},3) & \text{when } n \equiv 1 \pmod 4\\
(4^{\frac{n-2}4},2),(6,4^{\frac{n-6}4})  & \text{when } n \equiv 2 \pmod 4\\
(4^{\frac{n-3}4},3)  & \text{when } n \equiv 3 \pmod 4\\
\end{array}
$$
with the understanding that
partitions with given parts $2,3,6$ of positive multiplicity
do not occur when $n$ is too small.
\\
In particular, we have
$$
\maxp_5(n)= \left\{
\begin{array}{ll}
5^{\frac n4} & \text{when } n \equiv 0 \pmod 4\\
6\cdot 5^{\frac {n-5}4} & \text{when } n \equiv 1 \pmod 4\\
2\cdot 5^{\frac {n-2}4} & \text{when } n \equiv 2 \pmod 4\\
3\cdot 5^{\frac {n-3}4} & \text{when } n \equiv 3 \pmod 4\\
\end{array}
\right.
$$
\item[(v)] $k=6$.
For $n\geq 2$,
the maximal value $\maxp_6(n)$
of $p_6(n)$ on $P_6(n)$
is attained exactly at the partitions
$$
\begin{array}{ll}
(4^{\frac n4 }) & \text{when } n \equiv 0 \pmod 4\\
(5,4^{\frac{n-5}4}) & \text{when } n \equiv 1 \pmod 4\\
(4^{\frac{n-2}4},2)  & \text{when } n \equiv 2 \pmod 4\\
(4^{\frac{n-3}4},3)  & \text{when } n \equiv 3 \pmod 4\\
\end{array}
$$
\\
In particular, we have
$$
\maxp_6(n)= \left\{
\begin{array}{ll}
5^{\frac n4} & \text{when } n \equiv 0 \pmod 4\\
7\cdot 5^{\frac {n-5}4} & \text{when } n \equiv 1 \pmod 4\\
2\cdot 5^{\frac {n-2}4} & \text{when } n \equiv 2 \pmod 4\\
3\cdot 5^{\frac {n-3}4} & \text{when } n \equiv 3 \pmod 4\\
\end{array}
\right.
$$
\end{enumerate}
\end{theorem}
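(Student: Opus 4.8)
The plan is to turn the maximization of the multiplicative extension $p_k(\mu)=\prod_j p_k(\mu_j)$ over $\mu\in P_k(n)$ into a finite combinatorial problem, following the scheme used for the unrestricted case in \cite{BesOno}. The driving force is Theorem~\ref{thm:reg-inequality}: because $p_k$ is eventually strictly submultiplicative, replacing a part by a finer decomposition into $k$-regular parts never decreases $p_k(\mu)$ and, for large parts, strictly increases it. First I would tabulate the small values of $p_k$ and isolate, for each $k$, the most \emph{efficient} part, i.e.\ the integer $c$ maximizing $p_k(c)^{1/c}$: this is $c=3$ for $k=2$ (with $p_2(3)=2$), $c\in\{2,4\}$ for $k=3$, $c\in\{3,6\}$ for $k=4$, and $c=4$ for $k=5,6$ (with $p_5(4)=p_6(4)=5$). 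The free parameters $a,b$ in the statement arise from the exact identities $p_2(9)=p_2(3)^3$, $p_3(4)=p_3(2)^2$, $p_4(6)=p_4(3)^2$ and $p_5(6)=p_5(2)p_5(4)$, each of which is one of the equality entries in the table of Theorem~\ref{thm:reg-inequality}; they allow, respectively, three parts $3$ to be traded for a part $9$, two parts $2$ for a part $4$, and so on.

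Next I would prove that a maximizing partition may be taken with all parts below an explicit constant, reducing the problem to finitely many part-types. For $3\le k\le 6$ this is immediate: given a part $c\ge m_k$ of a $k$-regular partition, one writes $c=a+b$ with $a,b\ge n_k$ both not divisible by $k$ --- possible since $c\not\equiv 0\pmod k$, by choosing $a\in\{2,3,4\}$ avoiding the residues $0$ and $c\bmod k$ --- and then $p_k(a)p_k(b)>p_k(c)$ by Theorem~\ref{thm:reg-inequality}, so the split strictly increases $p_k(\mu)$. The case $k=2$ requires a twist, since an odd $c$ cannot be broken into two odd parts: here I would split $c=3+3+(c-6)$ and apply Theorem~\ref{thm:reg-inequality} through $p_2(6)=p_2(3)^2$, obtaining $p_2(3)^2\,p_2(c-6)=p_2(6)\,p_2(c-6)>p_2(c)$ for odd $c\ge 23$. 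In all cases a maximizer may be assumed to use only parts from a small explicit kernel.

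Third, and most delicate, is the finite optimization that must yield the stated families \emph{exactly}. Inside the bounded kernel I would first discard every part other than the efficient block(s) and a short list of admissible remainder parts needed to realize the residue of $n$ (namely $7$ for $k=2$; $5$ for $k=3$; $2,5,7$ for $k=4$; $2,3,6$ for $k=5$; $2,3,5$ for $k=6$), by comparing $p_k$ of each leftover part with the value of its best decomposition into blocks and remainders via the strict inequalities of Theorem~\ref{thm:reg-inequality}. Splitting into residue classes of $n$ then selects the listed shapes, and the remaining work is to decide which of them are genuinely distinct maximizers. This is read off the equality entries: for $k=4$ the equalities $p_4(2)p_4(3)=p_4(5)$ and $p_4(2)p_4(5)=p_4(7)$ involve the $4$-regular parts $5,7$ and hence link the four families for $n\equiv 1\pmod 3$; for $k=5$ the equality $p_5(2)p_5(4)=p_5(6)$ involves the $5$-regular part $6$ and produces the two families for $n\equiv 1,2\pmod 4$. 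By contrast, for $k=6$ the analogous equalities $p_6(2)p_6(4)=p_6(6)$ and $p_6(2)p_6(5)=p_6(7)$ involve the non-$6$-regular part $6$ or the strictly suboptimal part $7$ (as $p_6(4)p_6(3)=15>14=p_6(7)$), so no competitor survives and uniqueness holds; likewise for $k=2,3$ the many equality pairs containing the non-$k$-regular parts $6,3$ are inert, leaving only the block-exchange freedom.

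I expect the exactness bookkeeping of the previous paragraph to be the main obstacle: showing that the listed partitions are maximal is routine once the efficient block is identified, but proving that \emph{no other} partition attains the maximum demands a careful sweep over all decompositions of every small part and every residue class, keeping strict and non-strict inequalities apart. The parity wrinkle for $k=2$ and the genuinely small $n$ --- where a prescribed remainder part cannot yet appear --- are exactly the places the uniform pattern fails; these finitely many exceptions (in particular $n=11$ for $k=2$ and $n=3$ for $k=3$, together with the small-$n$ caveats on the parts $2,5,7$) fall outside the stated ranges and, like all finitely many cases surviving the reduction, are settled by direct computation.
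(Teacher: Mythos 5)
Your proposal follows essentially the same route as the paper: verify small $n$ by table, use Theorem~\ref{thm:reg-inequality} to show a maximizer has only small parts (with the same parity workaround for $k=2$, realized there by splitting an odd part $j=h+(h+1)$ and then breaking the even half), and then finish by a finite sweep of part-replacements in which the equality pairs of Theorem~\ref{thm:reg-inequality} account exactly for the interchangeable blocks $9\leftrightarrow 3^3$, $4\leftrightarrow 2^2$, $6\leftrightarrow 3^2$, $6\leftrightarrow(4,2)$ and the extra families for $k=4,5$. The plan, the key lemma, and the bookkeeping of strict versus non-strict inequalities all match the paper's proof; only the explicit finite case analysis remains to be written out.
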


\begin{proof}
(i) We will need the partitions where $\maxp_2(n)$ is attained
for $n\leq 22$;
these are given in Table~\ref{table:4} (computed by Maple).
We see that the assertion holds as stated up to $n=22$.
\medskip

\begin{table}[h]
\caption{\label{table:4} Maximum value partitions $\mu$ for $k=2$}
\begin{tabular}{|r|cc|cc|cc| }
\hline \rule[-3mm]{0mm}{8mm}
$n$ && $p_2(n)$ && $\maxp_2(n)$ && $\mu$ \\
\hline
$1$  && 1 && 1 && (1) \\
$2$  && 1 && 1 && (1,1) \\
$3$  && 2 && 2 && (3) \\
$4$  && 2 && 2 && (3,1) \\
$5$ && 3 && 3 && (5) \\
$6$&&  4 && 4 && $(3^2)$ \\
$7$&& 5 && 5 && (7)\\
$8$&& 6 && 6 && (5,3) \\
$9$&&  8 && 8 && $(9), (3^3)$ \\
$10$&& 10  && 10 && (7,3) \\
$11$&& 12 && 12 && $(11), (5,3^2)$ \\
$12$&& 15 &&  16 && $(9,3),(3^4)$ \\
$13$&& 18 &&  20 && $(7,3^2)$ \\
$14$&& 22 && 25 && $(7^2)$ \\
$15$&& 27 && 32 && $(9,3^2), (3^5)$ \\
$16$&& 32 && 40 && $(9,7), (7,3^3)$\\
$17$&& 38 && 50 && $(7^2,3)$\\
$18$&& 46 && 64 && $(9^2), (9,3^3),(3^6)$\\
$19$&& 54 && 80 && $(9,7,3),(7,3^4)$ \\
$20$&& 64 && 100&& $(7^2,3^2)$ \\
$21$&& 76 && 128&& $(9^2,3), (9,3^4), (3^7)$\\
$22$&& 89 && 160&& $(9,7,3^2), (7,3^5)$\\
\hline
\end{tabular}
\end{table}
\medskip

Now take $n>22$.
Let $\mu \in P_2(n)$ be such that $p_2(\mu)$ is maximal;
let $m_j$ be the multiplicity of a part $j$ in $\mu$.
Suppose $\mu$ has a part $j=2h+1\geq 19$;
let $\{h,h+1\}=\{2l,h'\}$.
Then by Theorem~\ref{thm:reg-inequality} and Table~\ref{table:4},
replacing $j$ by the parts $h',2l-3,3$ in $\mu$
would produce a partition $\nu\in P_2(n)$
such that $p_2(\nu)>p_2(\mu)$.
Thus  $\mu$ has no parts $j\geq 19$.
By Table~\ref{table:4},  a part $j\in \{13,15,17\}$
could be replaced in $\mu$ by a partition in $P_2(j)$
giving a partition $\nu\in P_2(n)$ of larger
$p_2$-value.
Thus $\mu$ only has odd parts $j\leq 11$.

Any two parts $(11^2)$, $(11,9)$, $(11,7)$, $(11,5)$, $(11,3)$, $(11,1)$
can be replaced by a $2$-regular partition to obtain a higher $p_2$-value,
see Table~\ref{table:4};
thus $m_{11}=0$.
Also $(7^3)$, $(7,5)$, $(7,1)$ can be replaced to obtain a higher
$p_2$-value. Thus $m_7\le 2$, and the part 7 can only occur when
$\mu$ is of the form $(9^a,7,3^b)$ or $(9^a,7^2,3^b)$;
in the first case  $n\equiv 1 \mod 3$, in the second case we have $n\equiv 2 \mod 3$.
Also $(5^2)$ can be replaced by $(7,3)$ to obtain a higher $p_2$-value,
so $m_5\le 1$; then replacing $(9,5)$ or $(5,3^3)$ by $(7^2)$,
and $(5,1)$ by $(3^2)$ shows that $\mu$ has no part 5. Hence if $\mu$ has no part 7, then $\mu$ is of the form $(9^a,3^b)$,
and $n\equiv 0 \mod 3$.
As $p_2((9))=p_2((3^3))$, the part 9 and the parts $3,3,3$ can always be used interchangeably.
Now for $n\ge 19$ and any congruence $n\equiv c \mod 3$,
$c\in \{0,1,2\}$,
we have found just one type of 2-regular partition
maximizing the $p_2$-value, namely $(9^a,7^c,3^b)$,
with $a,b\in \N_0$ such that $(3a+b)\cdot 3+c\cdot 7=n$,
where $p_2((9^a,7^c,3^b))=2^{3a+b}5^c=\maxp_2(n)$.
This proves the claim for $k=2$.
\medskip

(ii) By Table~\ref{table:5} the claim holds for $n\le 16$.
So we assume now that $n>16$.
\begin{table}[h]
\caption{\label{table:5} Maximum value partitions $\mu$ for $k=3$}
\begin{tabular}{|r|cc|cc|cc| }
\hline \rule[-3mm]{0mm}{8mm}
$n$ && $p_3(n)$ && $\maxp_3(n)$ && $\mu$ \\
\hline
$1$  && 1 && 1 && (1) \\
$2$  && 2 && 2 && (2) \\
$3$  && 2 && 2 && (2,1) \\
$4$  && 4 && 4 && $(4),(2^2)$ \\
$5$ && 5 && 5 && (5) \\
$6$&&  7 && 8 && $(4,2),(2^3)$ \\
$7$&& 9 && 10 && (5,2)\\
$8$&&  13 && 16 && $(4^2),(4,2^2),(2^4)$ \\
$9$&&  16 && 20 && $(5,4), (5,2^2)$ \\
$10$&& 22  && 32 && $(4^2,2),(4,2^3),(2^5)$ \\
$11$&& 27 && 40 && $(5,4,2),(5,2^3)$ \\
$12$&& 36 &&  64 && $(4^3),(4^2,2^3),(4,2^4),(2^6)$ \\
$13$&& 44 &&  80 && $(5,4^2),(5,4,2^2),(5,2^4)$ \\
$14$&& 57 && 128 && $(4^3,2),(4^2,2^3),(4,2^5),(2^7)$ \\
$15$&& 70 && 160&& $(5,4^2,2),(5,4,2^3),(5,2^5)$ \\
$16$&& 89 && 256&& $(4^4),(4^3,2^2),(4^2,2^4),(4,2^6),(2^8)$ \\
\hline
\end{tabular}
\end{table}

Let $\mu \in P_3(n)$
be such that $p_3(\mu)$ is maximal.
Suppose $\mu$ has a part $j\geq 17$.
Replace $j$ by $\nu_j=(j-2,2)$ if $j\equiv 1 \mod 3$, and by $\nu_j=(j-4,4)$
if $j\equiv 2 \mod 3$.
By Theorem~\ref{thm:reg-inequality} we have
$p_3(j) < p_3(\nu_j)$.
Thus $\mu$ only has parts $\le 16$.
By Table~\ref{table:5}, any of these can be replaced by a partition
of the form $(5^a,4^b,2^c,1^d)$ to increase the $p_3$-value,
and we note that the parts 4 and $2,2$ can be used interchangeably.
Hence only parts $1,2,4,5$ can appear in~$\mu$.
By Table~\ref{table:5},  the following replacements would increase
the $p_3$-value:
$(5^2) \to (2^5)$, $(5,1)\to (2^3)$, $(4,1),(2^2,1)\to (5)$,
$(1^2)\to (2)$.
This implies that $\mu$ can only have one of the forms
$(4^a,2^b)$ or $(5,4^a,2^b)$, where in the first case $n\equiv 0 \mod 2$, in the second case
$n \equiv 1 \mod 2$.
Hence $\maxp_3(n)=2^{\frac n2}$ when $n$ is even, and
$\maxp_3(n)=5\cdot 2^{\frac {n-5}2}$ when $n$ is odd.

\medskip

(iii)
By Table~\ref{table:6} the claim holds for $n\le 15$,
so now take $n\ge 16$.
Let
$\mu \in P_4(n)$ be such that $p_4(\mu)$ is maximal.
Note that by Table~\ref{table:6}, we may always exchange a part
$6$ against the parts $3,3$ without
changing  the $p_4$-value.
Suppose $\mu$ has a part $j\geq 9$.
Replace $j$ by $\nu_j=(j-2,2)$, when $j\not\equiv 2 \mod 4$, or by $\nu_j=(j-3,3)$
when $j\equiv 2 \mod 4$.
By Theorem~\ref{thm:reg-inequality}, $p_4(j)<p_4(\nu_j)$; hence $\mu$ only has parts $\le 7$.
Replacing $(7^2)$ by $(6^2,2)$, $(7,5)$ by $(6^2)$, $(7,2)$ by $(6,3)$,
$(7,1)$ by $(6,2)$ shows that $\mu$ can have a part $7$ only when it is
of the form $(7,6^a,3^b)$, and then $n\equiv 1 \mod 3$.
By Table~\ref{table:6}, in these partitions we may exchange $7$ with $(5,2)$ or $(3,2^2)$,  and $(7,3)$ with $(5^2)$ without changing the $p_4$-value.

Now assume that $\mu$ has no part~7.
Replacing $(5^3)$ by $(6^2,3)$, $(5^2,2)$ by $(6^2)$, $(5,1)$ by $6$,
shows that $\mu$ can have a part $5$ only when  $n\equiv 1 \mod 3$
and it is of the form $(6^a,5,3^b,2)$ or $(6^a,5^2,3^b)$ already discussed above,
or  $n\equiv 2 \mod 3$ and it is of the form $(6^a,5,3^b)$.
Note that $5$ can be exchanged with $(3,2)$ without changing the $p_4$-value.

Finally, when $\mu$ has no parts 5 and 7, the replacements of
$(6,1)$ by $7$, $(2^3)$ by~6,
$(3,1)$ by $(2^2)$,
$(2,1)$ by~$3$, $(1^2)$ by~$2$
show that $\mu$ can have no part~1 and $m_2\le 2$.
Then $\mu$ has one of the forms $(6^a,3^b)$, $(6^a,3^b,2)$ or $(6^a,3^b,2^2)$,
when $n$ is congruent to $0,2,1 \mod 3$, respectively.

Together with the remarks above, we then have
$\maxp_4(n)=3^{\frac n3}$ when $n\equiv 0 \mod 3$,
$\maxp_4(n)=4\cdot 3^{\frac {n-4}3}$ when  $n\equiv 1 \mod 3$,
and
$\maxp_4(n)=2\cdot 3^{\frac {n-2}3}$ when  $n\equiv 2 \mod 3$,
attained at the partitions as stated in the claim for $k=4$.

\begin{table}[h]
\caption{\label{table:6} Maximum value partitions $\mu$ for $k=4$}
\begin{tabular}{|r|cc|cc|cc| }
\hline \rule[-3mm]{0mm}{8mm}
$n$ && $p_4(n)$ && $\maxp_4(n)$ && $\mu$ \\
\hline
$1$  && 1 && 1 && (1) \\
$2$  && 2 && 2 && (2) \\
$3$  && 3 && 3 && (3) \\
$4$  && 4 && 4 && (2,2) \\
$5$ && 6 && 6 && (5), (3,2) \\
$6$&&  9 && 9 && $(6),(3^2)$ \\
$7$&& 12 && 12 && $(7),(5,2),(3,2^2)$\\
$8$&&  16 && 18 && $(6,2),(5,3),(3^2,2)$ \\
$9$&&  22 && 27 && $(6,3), (3^3)$ \\
$10$&& 29  && 36 && $(7,3),(6,2^2),(5^2),(5,3,2)(3^2,2^2)$ \\
$11$&& 38 && 54 && $(6,5),(6,3,2), (5,3^2), (3^3,2)$\\
$12$ && 50 && 81 && $(6^2), (6,3^2), (3^4)$\\
$13$ && 64 && 108 && $(7,6), (7,3^2),(6,5,2), (6,3,2^2),(5^2,3),
(5,3^2,2),(3^3,2^2)$\\
$14$ && 82 && 162 && $(6^2,2),(5^2,3),(6,3^2,2),(5,3^3),(3^4,2)$\\
$15$ && 105 && 243 && $(6^2,3),(6,3^3),(3^5)$\\
\hline
\end{tabular}
\end{table}
\medskip

(iv)
Table~\ref{table:7} shows that the assertion is true for $n\le 12$.
Take $n\ge 13$, and
let $\mu \in P_5(n)$ be such that $p_5(\mu)$ is maximal.
Note that by Table~\ref{table:7} we may always exchange a part
$6$ against the parts $4,2$ without
changing  the $p_5$-value.
Suppose $\mu$ has a part $j\geq 7$.
Replace $j$ by $\nu_j=(j-3,3)$, when $j\not\equiv 3 \mod 5$, or by $\nu_j=(j-4,4)$
when $j\equiv 3 \mod 5$.
By Table~\ref{table:7} and Theorem~\ref{thm:reg-inequality}
$p_5(j)<p_5(\nu_j)$; hence $\mu$ only has parts $\le 6$.

Replacing $(6^2)$ by $(4^3)$, $(6,2)$ by $(4^2)$, $(6,1)$ by $(4,3)$,
$(3^2)$ by $6$, $(3,1)$ or $(2^2)$ by 4, $(2,1)$ by 3 and $(1^2)$ by 2
increases the $p_5$-value.
Hence $\mu$ can only have the forms stated in (iv), and the
assertion about the $\maxp_5$-value also follows.

\begin{table}[h]
\caption{\label{table:7} Maximum value partitions $\mu$ for $k=5$}
\begin{tabular}{|r|cc|cc|cc| }
\hline \rule[-3mm]{0mm}{8mm}
$n$ && $p_5(n)$ && $\maxp_5(n)$ && $\mu$ \\
\hline
$1$  && 1 && 1 && (1) \\
$2$  && 2 && 2 && (2) \\
$3$  && 3 && 3 && (3) \\
$4$  && 5 && 5 && (4) \\
$5$ && 6 && 6 && (3,2) \\
$6$&&  10 && 10 && (6), (4,2) \\
$7$&& 13 && 15 && (4,3)\\
$8$&&  19 && 25 && $(4^2)$ \\
$9$&&  25 && 30 && (6,3), (4,3,2) \\
$10$&& 34  && 50 && $(6,4), (4^2,2)$ \\
$11$&& 44  && 75 && $(4^2,3)$ \\
$12$&& 60  &&  125 && $(4^3)$ \\
\hline
\end{tabular}
\end{table}
\medskip

(v)
Table~\ref{table:8} shows that the assertion is true for $n\le 10$.
Let $n\ge 11$, and
let $\mu \in P_6(n)$ be such that $p_6(\mu)$ is maximal.
Suppose $\mu$ has a part $j\geq 7$.
Replace $j$ by $\nu_j=(j-3,3)$, when $j\equiv 4 \mod 6$, or by $\nu_j=(j-4,4)$
when $j\not\equiv 4 \mod 6$.
By Table~\ref{table:8} and Theorem~\ref{thm:reg-inequality}
$p_6(j)<p_6(\nu_j)$; hence $\mu$ only has parts $\le 5$.
Replacing $(5^2)$ by $(4^2,2)$, $(5,1)$ by $(4,2)$, $(5,2)$ by $(4,3)$,
$(5,3)$ by $(4^2)$, $(3^2)$ by $(4,2)$, $(3,2)$ by 5,
$(3,1)$ or $(2^2)$ by 4, $(2,1)$ by 3 and $(1^2)$ by 2
increases the $p_6$-value.
Hence $\mu$ can only have the forms stated in (v), and the
assertion about the $\maxp_6$-value also follows in this final case.

\begin{table}[h]
\caption{\label{table:8} Maximum value partitions $\mu$ for $k=6$}
\begin{tabular}{|r|cc|cc|cc| }
\hline \rule[-3mm]{0mm}{8mm}
$n$ && $p_6(n)$ && $\maxp_6(n)$ && $\mu$ \\
\hline
$1$  && 1 && 1 && (1) \\
$2$  && 2 && 2 && (2) \\
$3$  && 3 && 3 && (3) \\
$4$  && 5 && 5 && (4) \\
$5$ && 7 && 7 && (5) \\
$6$&&  10 && 10 && (4,2) \\
$7$&& 14 && 15 && (4,3)\\
$8$&&  20 && 25 && $(4^2)$ \\
$9$&&  27 && 35 && (5,4) \\
$10$&& 37  && 50 && $(4^2,2)$ \\
\hline
\end{tabular}
\end{table}
\end{proof}

\section{Concluding remarks}

We note that recently also other multiplicative properties of the partition function have been studied
and one might ask whether those also hold for the generating function for $k$-regular partitions.
Originating in a conjecture by William Chen, DeSalvo and Pak in \cite{DeSalvoPak} have proved
log-concavity for the partition function for all $n>25$;
do we have an analogue of this?

Indeed, there is computational evidence for a version of Chen's conjecture
for $k$-regular partitions, i.e.,
when $n>n_0$ (with $n_0$ being relatively small) then for all
$m\in \{2,3, \ldots, n-1\}$:
$$
p_k(n)^2 > p_k(n-m)p_k(n+m)\:.
$$

The inequality
$p_k(1)p_k(n) = p_k(n) < p_k(n+1)$
has an easy combinatorial proof by an injection $P_k(n) \to P_k(n+1)$.
One may ask whether there is also a combinatorial argument for
proving the inequality in Theorem~\ref{thm:reg-inequality}.

As mentioned before,
the number $p_k(n)$ is equal to the number of partitions where no part
has a multiplicity $\geq k$.
But when we extend the corresponding (same)
generating function $p_k(n)$ to the set of partitions with all multiplicities being $<k$
in analogy to the extension to the set $P_k(n)$,
the behavior is quite different.
In particular, the maximal values on the
two different partition sets to a given $n\in\N$ are in general different,
and for the second extension, the sets of partitions giving the maximal value
are more complicated.

Hagis' formulae play a crucial role in this paper;  
as pointed out by the referee, 
results of this type have been obtained recently 
in a much wider context.  
Indeed, Bringmann and Ono \cite{BringmannOno} 
give exact formulae for the coefficients 
of all weight 0 modular functions 
and also all harmonic Maass forms of non-positive weight. 
This work might be employed to study other 
partition-related functions 
defined similarly as our maxp-functions.

\bigskip

{\bf Acknowledgement.}
The authors thank Michael Griffin for assisting with the calculations at the end of Section~\ref{sec:proof-reg-inequality}.

\end{document}